\documentclass{amsart}
\usepackage{amsthm}
\usepackage{pinlabel}
\usepackage{color}
\usepackage{algorithm}
\usepackage{subcaption}
\usepackage{tikz}
\usetikzlibrary{arrows.meta, positioning}
\usepackage{graphicx}
\usepackage{multicol}
\usepackage{enumerate}
\usepackage{hyperref}\hypersetup{colorlinks,allcolors=black}
\usepackage[capitalize,noabbrev]{cleveref}
\crefname{alphatheorem}{Theorem}{Theorems}

\usepackage{listings}
\usepackage{xcolor}

\newtheorem{theorem}{\bf Theorem}[section]
\newtheorem{lemma}[theorem]{\bf Lemma}
\newtheorem{corollary}[theorem]{\bf Corollary}
\newtheorem{proposition}[theorem]{\bf Proposition}

\newcommand{\rme}{\mathrm{e}}
\newcommand{\rmi}{\mathrm{i}}

\newcommand{\defeq}{\mathrel{\mathop:}=}

\newcounter{alpharesults}

\newtheorem{alphaconjecture}[alpharesults]{Conjecture}

\theoremstyle{definition}
\newtheorem{definition}[theorem]{Definition}
\newtheorem{remark}[theorem]{Remark}

\begin{document}

\title{Mutual arc presentations and braided open books}

\author{Benjamin Bode}
\address{Departamento de Matemática Aplicada a la Ingeniería Industrial, ETSIDI, Universidad Politécnica de Madrid, Rda. de Valencia 3, 28012 Madrid, Spain}
\email{benjamin.bode@upm.es}

\author{Chun-Sheng Hsueh}
\address{Humboldt-Universit\"at zu Berlin, Rudower Chaussee 25, 12489 Berlin, Germany.}
\email{chun-sheng.hsueh@hu-berlin.de}

\subjclass[2020]{Primary 57K35; Secondary 57K10}


\keywords{Braided open books, fibered links, mutual arc presentation}


\begin{abstract}
    We show that every canonically fibered link in $S^3$ is the binding of a braided open book in $S^3$, addressing a question of Montesinos and Morton. We introduce mutual arc presentations as our main technical tool, which we consider to be of independent interest. We prove that any fibered link admitting such a presentation is the binding of a braided open book. Furthermore, new examples of fibered links serving as bindings of braided open books are obtained via connected sum and cabling operations, thereby providing examples of bindings of braided open books that are not canonically fibered.
\end{abstract}

\maketitle

\section{Introduction}
A \emph{fibered link} in $S^3$ is an oriented link that is the binding of an open book in $S^3$; equivalently, its complement fibers over the circle, and the closure of each fiber is a Seifert surface with boundary orientation agreeing with the link orientation. 

This paper is motivated by the search for an alternative (more elementary and topological) proof of the Harer Conjecture~\cite{Harer}, which is now Theorem~\ref{thm:harer}. The existing proof relies on deep results from contact geometry, including Giroux's correspondence between contact structures and open books~\cite{girouxcorrespondencearbitrarydimensions, giroux, girouxcorrespondencedimension3} and the classification of overtwisted contact structures on $3$-manifolds by Eliashberg~\cite{Eliashberg}.

\begin{theorem}[\cite{giroux_goodman}]\label{thm:harer}
Every fibered link in $S^3$ can be obtained from the unknot and its fiber disk by a sequence of Hopf plumbings and deplumbings.
\end{theorem}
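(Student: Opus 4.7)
The plan is to invoke the Giroux correspondence between contact structures and open books up to positive stabilization, combined with Eliashberg's classification of overtwisted contact structures on $S^3$. Recall that every open book supports a contact structure (Thurston--Winkelnkemper), that this contact structure is unique up to isotopy (Giroux), and that positive stabilization of an open book corresponds geometrically to Hopf plumbing with a positive Hopf band and preserves the supported contact structure. Conversely, Giroux's theorem asserts that two open books support isotopic contact structures if and only if they admit a common positive stabilization. Thus any two fibered links whose open books support isotopic contact structures are already connected by positive Hopf plumbings and deplumbings alone.

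Next, I would observe that the trivial open book (unknot binding, disk page) supports the unique tight contact structure $\xi_{std}$ on $S^3$. So it suffices to reduce the general case to the tight one. Given an arbitrary fibered link $L$ with supported contact structure $\xi_L$, I would perform a single negative Hopf plumbing on $L$, producing a new fibered link $L^-$; the key geometric input is that negative stabilization introduces an explicit overtwisted disk, so $\xi_{L^-}$ is overtwisted. Doing the same to the unknot yields the negative Hopf link whose supported structure $\xi_0$ is also overtwisted. By Eliashberg's theorem, overtwisted contact structures on $S^3$ are classified up to isotopy by the homotopy class of the underlying oriented $2$-plane field, detected on $S^3$ by the $d_3$-invariant; by performing additional negative Hopf plumbings on whichever side is needed, with careful bookkeeping of how each negative plumbing shifts $d_3$, one can arrange $\xi_{L^-}$ and the stabilized $\xi_0$ to lie in the same homotopy class. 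Applying Giroux's theorem then produces a sequence of positive Hopf (de)plumbings between the two resulting open books, and composing with the negative plumbings on each side connects $L$ to the unknot by Hopf plumbings and deplumbings of both signs.

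The main obstacle is less conceptual than foundational: the ``hard direction'' of the Giroux correspondence, namely the uniqueness statement that any two open books supporting the same contact structure share a common positive stabilization, is itself the deep ingredient, and its only known proofs pass through convex surface theory, branched covers, or contact cell decompositions. A secondary obstacle is verifying that negative Hopf plumbing indeed yields an overtwisted structure and controlling the $d_3$-shift, which, while standard, requires identifying the overtwisted disk explicitly inside the negatively stabilized page. Bypassing both of these by a direct, three-dimensional topological argument is exactly the program motivating the present paper's introduction of mutual arc presentations.
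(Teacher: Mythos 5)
The paper does not supply its own proof of Theorem~\ref{thm:harer}; the result is imported from~\cite{giroux_goodman}, and the introduction explicitly characterizes ``the existing proof'' as resting on Giroux's open-book correspondence~\cite{girouxcorrespondencearbitrarydimensions, giroux, girouxcorrespondencedimension3} and Eliashberg's classification of overtwisted contact structures~\cite{Eliashberg}. Your sketch is a faithful outline of exactly that argument, so there is no paper-internal proof to compare it against; the whole point of the paper is to try to \emph{avoid} this machinery, and it succeeds only for special families (canonically fibered links, and those built from them by braided Stallings plumbing, connected sum, and satellite/cable operations). One wording slip worth flagging: you write ``it suffices to reduce the general case to the tight one,'' but the reduction you then carry out goes the other way. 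Tight structures on $S^3$ are rigid, so there is nothing to gain by aiming at tightness; the leverage comes from pushing \emph{both} open books into the overtwisted regime, where Eliashberg's theorem reduces isotopy of contact structures to homotopy of plane fields, and on $S^3$ this homotopy class is controlled by $d_3$. Since positive stabilization fixes the supported contact structure (hence $d_3$) while every negative stabilization shifts $d_3$ by a fixed amount in one direction, you can always match the invariants by stabilizing the side with the smaller value; with that clarification your account is correct and is precisely the proof scheme the cited reference uses.
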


Moreover, we are interested in different potential strengthenings of Theorem~\ref{thm:harer}. 
Montesinos and Morton observed that Theorem~\ref{thm:harer} is implied by Conjecture~\ref{conj:mm}.

\begin{alphaconjecture}[\cite{montesions_morton}]\label{conj:mm}
For every fibered link $L$ there is a simple branched cover $\pi \colon S^{3} \to S^{3}$ branched over a link $L'$ and two unknots $\alpha$ and $\beta$, which are braid axes of $L'$, such that $\pi^{-1}(\alpha) = L$ and $\pi^{-1}(\beta)$ is the unknot.
\end{alphaconjecture}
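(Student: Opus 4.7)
My plan is to reformulate Conjecture~\ref{conj:mm} as a statement about open books and attack it via a Birman--Hilden-type lifting argument. If $\beta$ is an unknotted braid axis of $L'$, the complement $S^{3}\setminus\beta$ fibers over $S^{1}$ with disk pages $D_t$, each meeting $L'$ transversely in $n$ points. Pulling back by the simple branched cover $\pi$ along $L'$ yields a fibration of $S^{3}\setminus\pi^{-1}(\beta)$ over $S^{1}$ whose pages $\widetilde{D}_t$ are simple branched covers of $D_t$ branched over $D_t\cap L'$. Since $\pi^{-1}(\beta)$ is required to be the unknot, this is an open book of $S^{3}$ with unknotted binding. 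The second braid axis $\alpha$ is transverse to every $D_t$, so $L=\pi^{-1}(\alpha)$ is transverse to every page $\widetilde{D}_t$ upstairs. Hence Conjecture~\ref{conj:mm} is equivalent to realizing $L$ as a braided link inside an unknot-bound open book whose pages are simple branched covers of disks---exactly the sort of braided open book structure announced in the abstract.

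Given this reformulation, the task is: starting from a fibered link $L$ with fiber surface $\Sigma$ and monodromy $\phi$, find a presentation of $\Sigma$ as a simple branched cover $\Sigma\to D^{2}$ with branch set $P\subset\mathrm{int}(D^{2})$, together with a braid $\gamma$ on $P$ whose closure around $\beta$ is $L'$ and which lifts to $\phi$ under the cover, and a second braid axis $\alpha$ of $L'$ whose preimage is geometrically $L$. Every compact orientable surface with boundary admits a simple branched-cover presentation over the disk, so the difficulty is not the existence of $\Sigma\to D^{2}$ but rather the simultaneous compatibility with $\phi$ and with a preferred second axis $\alpha$ producing $L$.

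The main obstacle is the lifting problem. For a fixed branched-cover presentation, the subgroup of $\mathrm{MCG}(\Sigma)$ consisting of mapping classes that lift braid actions on $P$ is in general a proper subgroup, so not every monodromy $\phi$ arises from a braid in this way. This is precisely the obstruction that mutual arc presentations are designed to address: such a presentation packages both a lift of the monodromy and the data needed to locate a second braid axis downstairs. A proof along these lines would proceed by (i) showing, after a suitable stabilization of $\Sigma$ and corresponding enlargement of $P$, that every fibered link admits a mutual arc presentation, thereby extending the paper's main theorem beyond canonically fibered links, and (ii) verifying that the construction produces a braid axis $\alpha$ whose preimage is $L$ as an embedded link and not merely in its isotopy class. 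Step (i) is the genuine hard part, because stabilization of the fiber $\Sigma$ naturally corresponds to Hopf plumbing, and controlling such stabilizations without invoking Theorem~\ref{thm:harer} requires a new topological input coming from the braided open book structure itself rather than from the existing open book calculus.
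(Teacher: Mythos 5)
Conjecture~\ref{conj:mm} is not proved in this paper; it is stated as an open conjecture (attributed to Montesinos and Morton), and the paper only establishes it for special classes of fibered links. So there is no ``paper's own proof'' for you to match. What the paper does prove is the chain of implications Conjecture~\ref{conj:mutual_arc} $\Rightarrow$ Conjecture~\ref{conj:mutual_braid} $\Rightarrow$ Conjecture~\ref{conj:mm} (the first implication is Proposition~\ref{prop:mutual book presentation to mutually braided}; the second is from~\cite{Bode2019, bode:braided}), and then it verifies Conjecture~\ref{conj:mutual_arc} for canonically fibered links (Theorem~\ref{thm:gc}) and shows Conjecture~\ref{conj:mutual_braid} is stable under connected sums and certain satellites.

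Your proposal is an outline, not a proof, and you say as much: you correctly identify the Birman--Hilden-type reformulation of Conjecture~\ref{conj:mm} in terms of braided open books (this is precisely the equivalence established in~\cite{bode:braided} and used by the paper), and you correctly identify that mutual arc presentations are the mechanism that packages the monodromy-lifting data and the second braid axis. That part of your plan reproduces the paper's implication structure faithfully. However, your step (i) --- showing that \emph{every} fibered link admits a mutual arc presentation --- is exactly Conjecture~\ref{conj:mutual_arc}, which the paper leaves open and only establishes for canonically fibered links. You flag this as ``the genuine hard part,'' and you are right: no argument is given for it, either by you or by the paper. There is also a subtle issue in your step (ii): you ask for a branched cover producing $L$ ``as an embedded link and not merely in its isotopy class,'' but the conjecture only concerns isotopy classes, so this is not an additional constraint; the real constraint is that $\pi^{-1}(\beta)$ be unknotted and that $\alpha,\beta$ both be braid axes of $L'$, which is what the mutual braiding condition guarantees. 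In short, your proposal is a correct high-level roadmap that tracks the paper's framework, but it does not close the gap --- neither does the paper, which is why this remains Conjecture~A rather than a theorem.
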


Montesinos and Morton proved Conjecture~\ref{conj:mm} for all fibered links that can be obtained from the unknot and its fiber disk by a sequence of Hopf plumbings (without any deplumbings)~\cite{montesions_morton}.

The property in Conjecture~\ref{conj:mm} is related to one of four possible notions of what it means for an open book in $S^3$ to be braided, which have been shown to be equivalent to each other~\cite{bode:braided}. The most natural of these four definitions goes as follows. The \emph{unbook} is the standard open book in $S^3$ with unknotted binding, and an open book in $S^3$ is said to be \emph{braided} if there exists an unknot $U$ that is braided relative to it, meaning that $U$ is positively transverse to every page of the open book, and, simultaneously, the binding of the given open book is braided relative to the unbook with binding $U$.

Equivalently, the bindings of braided open books correspond precisely to the closures of $P$-fibered braids. A braid $B$ is \emph{$P$-fibered} if it can be represented by a loop of polynomials whose argument map $\arg g \colon (\mathbb{C}\times S^{1}) \setminus B \longrightarrow S^{1}$ induces a fibration.

\begin{alphaconjecture}[\cite{bode:braided}]\label{conj:mutual_braid}
Every fibered link in $S^3$ is the binding of a braided open book, or equivalently, the closure of a $P$-fibered braid.
\end{alphaconjecture}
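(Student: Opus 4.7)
The plan is to attack Conjecture~\ref{conj:mutual_braid} through the framework of mutual arc presentations announced in the abstract, via a two-step strategy: first, prove a reduction theorem asserting that any fibered link admitting a mutual arc presentation is the binding of a braided open book, and second, show that \emph{every} fibered link admits such a presentation.

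For the first step, I would start from the combinatorial data of a mutual arc presentation and extract from it an explicit $P$-fibered polynomial, or equivalently, an unknotted transverse axis $U$ together with a braiding of the binding relative to the unbook with binding $U$. The idea is that the arcs in the presentation should simultaneously encode the braid word for the binding and a location for the transverse axis, so that the fibration condition translates into an argument-monotonicity condition that can be packaged into a loop of polynomials with the required properties. Here one can try to mimic the explicit polynomial constructions already used to produce $P$-fibered braids from other combinatorial data.

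For the second step, the natural route is induction via Theorem~\ref{thm:harer}: every fibered link arises from the unknot by a sequence of Hopf plumbings and deplumbings, so it suffices to show that the unknot admits a trivial mutual arc presentation and that admitting such a presentation is preserved under both operations. Plumbing should be tractable by analogy with the constructions of Montesinos and Morton~\cite{montesions_morton}, which handle iterated Hopf plumbings starting from the unknot; one concatenates new arcs in a controlled way to accommodate each new Hopf band, and, if the reduction theorem of step one is set up compatibly, this translates directly to a braided open book on the plumbed link.

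The main obstacle I anticipate is \emph{deplumbing}. Unlike plumbing, deplumbing requires locating a Hopf band inside a mutual arc presentation and excising it while preserving the defining combinatorial properties of the presentation, which is inherently a nonlocal operation. This is precisely the step that has long blocked a purely topological proof of Theorem~\ref{thm:harer}, and the reason Montesinos--Morton's argument does not extend past pure plumbing sequences. A complementary angle is to exploit the closure of the braided-binding property under connected sum and cabling (also announced in the abstract) to reduce to prime, non-cabled fibered links, and then try to verify directly that every such link is canonically fibered or otherwise covered by the mutual-arc-presentation machinery; but the deplumbing obstruction reappears in a different guise there, so I expect this to be the true crux of the conjecture.
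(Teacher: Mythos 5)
This statement is a conjecture, not a theorem of the paper: the authors establish it only for canonically fibered links (Theorem~\ref{thm:gc}), alternating links and certain almost alternating links (Corollary~\ref{cor:alternating}), and families obtained from known examples via braided Stallings plumbing (Theorem~\ref{thm:stallings_plumb}) or cabling (Theorem~\ref{thm:sat}). Your two-step plan correctly mirrors the paper's framework, but it does not close the gap, and you say so yourself.

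Your step one is Proposition~\ref{prop:mutual book presentation to mutually braided}, proved in more generality as Proposition~\ref{prop: detailed mutual book presentation to mutually braided}. The paper's argument is a local topological deformation: near each intersection point $p_i$ between the two bindings, one rotates the arc of $L$ along a small normal disk $D_i$ onto $\partial D_i$ so that it becomes positively transverse to the other open book's pages, and then perturbs slightly. There is no need to produce an explicit $P$-fibered loop of polynomials; the equivalence of the four definitions of braided open book is simply imported from~\cite{bode:braided}. Reconstructing the polynomial data directly, as you propose, would be considerably harder than what the paper actually needs.

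Your step two is Conjecture~\ref{conj:mutual_arc} and is open. Two cautionary remarks on your proposed induction via Theorem~\ref{thm:harer}. First, it would be self-defeating relative to the stated purpose: the entire motivation of the mutual-arc-presentation program is to re-derive Theorem~\ref{thm:harer} without Giroux--Goodman and the contact-geometric input, so taking Theorem~\ref{thm:harer} as input would establish Conjecture~\ref{conj:mutual_braid} as a logical fact but would not achieve the goal the paper sets itself. Second, even the plumbing leg of your induction is not delivered by the results you cite. Montesinos--Morton~\cite{montesions_morton} prove Conjecture~\ref{conj:mm} (Property~A), not Property~B or~C, for pure plumbing towers, so their result does not hand you a mutual arc presentation or a braided open book after a Hopf plumbing. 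Meanwhile Theorem~\ref{thm:stallings_plumb} covers only \emph{braided} Stallings plumbings, where both surfaces are already presented in a compatible braided position and glued along extremal disks. A general Hopf plumbing on a braided page does not obviously arrive in that form, so you would need an additional argument that every Hopf plumbing can be realized as a braided Stallings plumbing before you ever reach the deplumbing wall, which you rightly identify as the crux. The paper makes no such claim, which is precisely why it obtains new examples rather than the full conjecture.
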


The first-named author~\cite{bode:braided} showed that Conjecture~\ref{conj:mm} is implied by Conjecture~\ref{conj:mutual_braid} and proved Conjecture~\ref{conj:mutual_braid} for fibered links with braid index of at most $3$ and closures of homogeneous braids (the closure of $P$-fibered braids are always fibered as are the closures of homogeneous braids). In fact, the links that satisfy Conjecture~\ref{conj:mutual_braid} satisfy a property that a priori is stronger than what is stated in Theorem~\ref{thm:harer}: All of their plumbing arcs can be taken to have a certain symmetry with respect to a fixed simple branched cover $\pi\colon S^3\to S^3$ as in Conjecture~\ref{conj:mm}. Therefore, a proof of Conjecture~\ref{conj:mutual_braid} would not only give a new proof of Theorem~\ref{thm:harer}, but would strengthen it by showing that the plumbing arcs can be arranged to satisfy this additional symmetry.

The goal of this paper is to provide infinitely many new examples of fibered links that support Conjecture~\ref{conj:mutual_braid} and hence Conjecture~\ref{conj:mm}, and hence for which Theorem~\ref{thm:harer} holds without applying~\cite{girouxcorrespondencearbitrarydimensions, Eliashberg,giroux, giroux_goodman,girouxcorrespondencedimension3}.

Firstly, we introduce \emph{mutual arc presentations}, see Definition~\ref{def:mutual arc}, generalizing arc presentations defined by Cromwell~\cite{Cromwell}.
\begin{proposition}\label{prop:mutual book presentation to mutually braided}
    If a fibered link in $S^3$ admits a mutual arc presentation, then it satisfies Conjecture~\ref{conj:mutual_braid}. 
\end{proposition}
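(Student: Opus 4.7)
The plan is to extract from a mutual arc presentation two compatible transversality conditions on a pair $(L,U)$, where $U$ is the unknot underlying the arc presentation of $L$: first, that $L$ is positively transverse to every page of the unbook with binding $U$; second, that $U$ is positively transverse to every fiber of the fibration of $S^3 \setminus L$. Together these are precisely the conditions that $L$ is the binding of a braided open book in the sense recalled in the introduction, so by the equivalence in~\cite{bode:braided} this will prove Conjecture~\ref{conj:mutual_braid} for $L$.

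For the first condition I would appeal to the classical procedure underlying Cromwell's arc presentations: each arc of $L$ lies on a half-plane page of the open book of $U$, so a small perturbation in the angular direction, together with a rounding off near the endpoints on $U$, makes $L$ positively transverse to all pages of the unbook of $U$ and disjoint from $U$ itself. The new ingredient is the ``mutual'' half of the data, which should present $U$ as a union of arcs, each lying on a page of the fibration of $S^3\setminus L$ with endpoints on $L$. Running exactly the same perturbation procedure, but now applied to $U$ relative to the open book of $L$, pushes $U$ off the pages on which its arcs lie and makes it positively transverse to every fiber of $S^3\setminus L$, i.e.\ braided with respect to the open book of $L$.

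The hard part is the simultaneity of the two perturbations: isotoping $L$ off the pages of $U$'s unbook must not destroy the presentation of $U$ by arcs on pages of $L$'s fibration, and conversely. The definition of a mutual arc presentation should be engineered precisely so that, in a neighbourhood of each of the finitely many endpoint points $L\cap U$, both fibrations look like standard half-plane pencils compatible in a prescribed way, and the two local perturbations can then be carried out consistently from both viewpoints. Verifying this local model and checking that the global perturbation preserves both sets of pages is the technical heart of the argument; once it is established, $(L,U)$ witnesses the definition of a braided open book and the proposition follows.
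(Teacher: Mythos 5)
Your plan is the right one and matches the paper's strategy: push $L$ off the pages of the unbook of $U$ to make it braided relative to $U$, and symmetrically push $U$ off the pages of the fibration of $L$, then invoke the equivalences in~\cite{bode:braided}. You also correctly identify the crux, namely that the two perturbations interfere near the finitely many points of $L\cap U$. But you stop precisely at that crux, writing that "verifying this local model... is the technical heart of the argument," without actually supplying it. That is a genuine gap, not a deferred triviality, and it is exactly what the paper's Proposition~\ref{prop: detailed mutual book presentation to mutually braided} is devoted to.

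Two things are missing. First, the paper does not attempt a single simultaneous isotopy of $L\cup U$ (which could not work, since the mutual arc presentation has nonempty $L\cap U$ and the endpoint must be a disjoint pair). It performs two \emph{separate} isotopies in sequence: first $L$ is deformed, which in particular drags the open book $\operatorname{OB}'$ of $L$ along with it; then $U$ (in the paper's notation, $B$) is deformed relative to the deformed $\operatorname{OB}'$. Your phrasing of the difficulty as "simultaneity" suggests you have not made this reduction. Second, you do not supply the local argument that makes the second isotopy possible. Around each intersection point $p_i$ one has two short segments $L_i$ and $B_i$ crossing in a small ball bounded by a sphere $S$, with the two families of pages meeting $S$ in lines of longitude for the two respective poles. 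The first isotopy rotates $L_i$ to the equator of $S$ (with orientation determined by $B$), and one can arrange that the resulting perturbation of the pages of $\operatorname{OB}'$ on $S$ is supported near the deformed $L_i$. The decisive observation, which is not present in your proposal, is then an orientation check: the endpoints of $B_i$ on $S$ split $\partial D_i'$ (the equator relative to the other pole) into two arcs, and the arc whose orientation agrees with that of $B_i$ is disjoint from the deformed $L_i$. Rotating $B_i$ onto that arc therefore can be done while fixing $L$, and it makes $B_i$ positively transverse to the pages of $\operatorname{OB}'$ without spoiling the transversality of $L_i$ to the pages of $\operatorname{OB}$. Without this orientation compatibility the second push could collide with the already-deformed $L$, so the "engineered compatibility" you hope for is not automatic; it has to be proved, and this is how.
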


In particular, the following conjecture implies Conjecture~\ref{conj:mutual_braid}.

\begin{alphaconjecture}\label{conj:mutual_arc}
Every fibered link in $S^3$ admits a mutual arc presentation.
\end{alphaconjecture}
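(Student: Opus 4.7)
The plan is to attack Conjecture~\ref{conj:mutual_arc} by combining Cromwell's theorem (every link in $S^3$ admits an arc presentation with respect to a single unknotted axis) with the fibration data of the given link, via an inductive argument on a suitable complexity. Concretely, fix a fibered link $L$ with open book decomposition $\{F_t\}_{t\in S^1}$ of $S^3$ and begin with a Cromwell arc presentation of $L$ with axis $\alpha$. After a preliminary isotopy the half-planes bounded by $\alpha$ can be made transverse to every page $F_t$, so each half-plane inherits additional structure from the fibration of the complement of $L$.

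The second step is to construct a candidate mutual axis $\beta$ from the fibration and verify that $L$ simultaneously admits arc presentations with respect to both $\alpha$ and $\beta$. A natural choice is an unknotted loop positively transverse to every page $F_t$; for example, one can try $\beta$ obtained from a properly embedded arc in a fixed page $F_0$ together with its image under a small translation of the fibration parameter, closed up carefully so that the resulting curve is unknotted in $S^3$. Such a $\beta$ is automatically an axis for the open book with binding $L$, and the goal is then to arrange the $\alpha$-half-planes to interact with $\beta$ in such a way that each $\beta$-half-plane meets $L$ in precisely one arc, realizing the mutuality of Definition~\ref{def:mutual arc}.

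The main obstacle is realizing the arc structures around $\alpha$ and $\beta$ simultaneously: standard Cromwell moves (exchange and stabilization) with respect to one axis tend to destroy compatibility with the other. I expect the crux to be an interleaved sequence of elementary moves controlled by a joint complexity such as the sum of the $\alpha$- and $\beta$-arc indices, together with a proof that whenever mutuality fails, there is always a move strictly decreasing this joint complexity. Since Conjecture~\ref{conj:mutual_arc} implies Theorem~\ref{thm:harer} via Proposition~\ref{prop:mutual book presentation to mutually braided} combined with the reduction in~\cite{bode:braided}, any such induction must be carried out without invoking Theorem~\ref{thm:harer}; as a softer first target I would attack the cases already known to satisfy Conjecture~\ref{conj:mutual_braid} (fibered links of braid index at most three and closures of homogeneous braids) and try to extract a combinatorial criterion on an arc presentation that is equivalent to mutuality, which the desired moves are then designed to achieve in general.
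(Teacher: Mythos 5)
You should first be aware that the statement you are addressing is stated in the paper as Conjecture~\ref{conj:mutual_arc} and is left open there: the paper proves it only for canonically fibered links (Theorem~\ref{thm:gc}, via the explicit smoothing construction of Lemma~\ref{lemma:mutual page presentation}), so there is no proof in the paper to compare against, and what you propose would be a genuinely new result. As it stands, however, your proposal is a research program rather than a proof. Its decisive step --- that whenever mutuality fails there exists an exchange/stabilization-type move strictly decreasing a joint $\alpha$-$\beta$ complexity --- is precisely the hard part and is left entirely unestablished; nothing in the sketch indicates why such a monotone move should exist, and you yourself flag it as an expectation. The paper's partial result does not proceed by moves on a Cromwell presentation at all: it builds the auxiliary unknot directly from a diagram realizing the fiber surface by Seifert's algorithm, which is why it only reaches canonically fibered links.

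There is also a conceptual mismatch in your setup. In a mutual arc presentation (Definition~\ref{def:mutual arc}) the unknot must \emph{intersect} $L$ in finitely many points, and each of its arcs must lie \emph{tangentially} on a fiber of the lens thickening of the fiber surface of $L$ (i.e.\ on a page of the open book of $L$), while each arc of $L$ lies on a disk page of the unbook. Your candidate $\beta$ is instead chosen \emph{positively transverse} to every page $F_t$ and disjoint from $L$: that is the mutually braided condition of Conjecture~\ref{conj:mutual_braid}, which is the \emph{conclusion} one extracts from a mutual arc presentation via Proposition~\ref{prop:mutual book presentation to mutually braided}, not a useful starting point for constructing one; passing from a transverse configuration back to a tangential one is not addressed anywhere in your plan. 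Likewise, the requirement you state --- that each $\beta$-half-plane meet $L$ in precisely one arc --- is not what the definition asks (several arcs may share a page), and there is no argument that your closed-up curve $\beta$ built from an arc in $F_0$ is unknotted. So the proposal both omits the key inductive lemma and aims at the wrong target condition; to salvage it you would need to reformulate the goal as placing the arcs of $\beta$ on pages of the open book of $L$ while keeping $L$ in arc position relative to $\beta$, and then actually produce the complexity-decreasing moves.
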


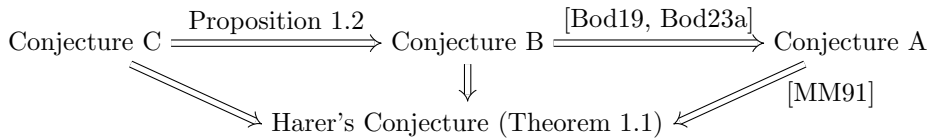
\begin{figure}[h!]
    \centering
        \begin{tikzpicture}[
            box/.style={draw, rectangle, rounded corners, minimum width=2.5cm, minimum height=0.7cm, align=center, font=\small},
            label/.style={font=\scriptsize, align=center}
        ]
        
        \node[] (C1) {Conjecture~\ref{conj:mutual_arc}};
        \node[right=2.8cm of C1] (C2) {Conjecture~\ref{conj:mutual_braid}};
        \node[right=2.8cm of C2] (C3) {Conjecture~\ref{conj:mm}};
        \node[below=0.5cm of C2] (T) {Harer's Conjecture (Theorem~\ref{thm:harer})};

        \draw[-Implies,double distance=2pt,  thin] (C3) -- (T.east) node[midway, right=0.5cm]{\cite{montesions_morton}};
        \draw[-Implies,double distance=2pt,  thin] (C1) -- (T.west);
        \draw[-Implies,double distance=2pt,  thin] (C2) -- (T);
        \draw[-Implies, double distance=2pt, thin] (C1) -- (C2) node[midway, above=0cm] {Proposition~\ref{prop:mutual book presentation to mutually braided}};
        \draw[-Implies, double distance=2pt, thin] (C2) -- (C3) node[midway, above=0cm] {\cite{Bode2019, bode:braided}};
        
        \end{tikzpicture}
    \caption{Implications among conjectures and Theorem~\ref{thm:harer}.}
    \label{fig:flowchart}
\end{figure}

For short, we say a fibered link in $S^3$ has \emph{Property $A$ (resp.\ B, or C)} if it is not a counterexample to Conjecture~\ref{conj:mm} (resp.~\ref{conj:mutual_braid}, or~\ref{conj:mutual_arc}). 

A large subclass of fibered links is \emph{canonically} fibered, meaning that their fiber surfaces arise from applying Seifert’s algorithm to a link diagram.

\begin{theorem}\label{thm:gc}
All canonically fibered links in $S^3$ have Property~C.
\end{theorem}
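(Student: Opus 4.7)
The plan is to prove Theorem~\ref{thm:gc} by induction along a plumbing decomposition of the canonical fiber surface that is visible directly in any link diagram realising it, thereby avoiding the contact-topological arguments of~\cite{giroux_goodman}. If $L$ is canonically fibered, then by definition there is a link diagram $D$ whose Seifert surface $F_D$ is a fiber. The surface $F_D$ decomposes naturally into Seifert disks joined by twisted bands at the crossings of $D$, and a classical combinatorial argument organises this band decomposition into an iterated (positive or negative) Hopf plumbing of $F_D$ onto a single disk. This provides a canonical inductive scheme that does not rely on the Giroux correspondence or Eliashberg's classification.

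The base case is the unknot bounded by the standard disk, which trivially admits a mutual arc presentation: one arc on one page, with the mutual condition vacuously satisfied. For the inductive step, one assumes that $L$ is canonically fibered and already admits a mutual arc presentation $\mathcal{A}$ whose page structure is aligned with the diagrammatic plumbing decomposition of $F_L$; this slightly stronger hypothesis is what will propagate through the induction. Let $L'$ arise from $L$ by plumbing a Hopf band $H$ along an arc $\alpha \subset F_L$ prescribed by the next stage of the decomposition. Because $\alpha$ already sits in $F_L$ in the combinatorial position dictated by $D$, it can be isotoped into the union of pages used by $\mathcal{A}$, and the two new boundary arcs of $H$ then extend $\mathcal{A}$ to a candidate mutual arc presentation of $L'$.

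The main obstacle is verifying, at each inductive step, that this extension is genuinely \emph{mutual}: the second braid axis of the mutual structure must remain transverse to all pages throughout both the isotopy of $\alpha$ and the attachment of $H$. This reduces to a local analysis in a neighbourhood of the plumbing region, together with a global check that the second axis can be kept braided over the whole isotopy. The bulk of the technical work, I expect, will lie in setting up a sufficiently canonical diagrammatic mutual arc presentation at the base of the induction --- strong enough that the hypothesis propagates cleanly --- and in the bookkeeping of which pages are added, and where, at each successive Hopf-band insertion. Once this framework is in place, each inductive step should reduce to an essentially local Hopf-band attachment.
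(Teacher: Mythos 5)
Your approach is genuinely different from the paper's. The paper does not induct along a Hopf-plumbing decomposition of the canonical fiber surface; instead, it proves a direct, global construction (Lemma~\ref{lemma:mutual page presentation}): from any diagram $\mathcal{D}_L$ realising the canonical Seifert surface $\mathcal{S}$, it builds a specific planar $4$-valent graph (little loops around crossings plus parallel loops along arcs), smooths that graph to an unknot $U$, and then shows by an explicit case analysis (short edges, parallel edges, long edges through the half-twisted bands) that $U$ can be deformed so that all its arcs lie on fibers of a lens thickening of $\mathcal{S}$, while $L$ is already an arc presentation relative to $U$. Theorem~\ref{thm:gc} then follows in one line. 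The whole thing happens at once in a single diagram; no inductive bookkeeping is required.

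The inductive route you sketch is plausible in spirit --- every canonical Seifert surface is indeed an iterated Hopf plumbing read off from the crossings --- but it has a genuine gap exactly where you flag it, and this gap is not a routine local check. When you plumb a Hopf band $H$ along an arc $\alpha \subset F_L$, the open book changes inside a ball $B$ around $\alpha$. Even granting that the existing unknot $U$ can be kept outside $B$ (so that $U$'s arcs remain on fibers of the lens thickening of $F_{L'}$), the \emph{other} half of mutuality is not automatic: the two new boundary arcs of $H$ are new pieces of $L'$, and you must exhibit them as arcs lying on pages of the unbook of $U$, intersecting $U$ in the right way and not colliding with the arcs of $L$ already placed. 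Nothing in the statement ``$\alpha$ is in the combinatorial position dictated by $D$'' forces this. Moreover, the strengthened inductive hypothesis you rely on (``page structure aligned with the diagrammatic plumbing decomposition'') is never defined, so you have no invariant to propagate, and the base case is not actually vacuous: two unknots in a mutual arc presentation must already intersect in at least two points and each lie in pages of the other's unbook, which is the Hopf-link-on-a-page picture, not a single arc on a single page. Until the inductive step is made precise (in particular, how the new arcs of $L'$ are routed relative to $U$, and why $U$ need not be re-routed through the plumbing ball), the argument does not close. The paper's construction sidesteps all of this by never modifying the link: $U$ is manufactured to order from the whole diagram, and the only deformations are small rotations of the edges of $U$ around $L$ into nearby fibers, which are verified to be collision-free by the combinatorics of the $4$-valent graph.
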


It can be deduced from~\cite{GHY, montesions_morton} that alternating and almost alternating fibered links have Property~A. While it remains unknown if all almost alternating fibered have Properties~B or~C, we obtain the following using~\cite{alternating1, alternating2, Stoimenow}.
\begin{corollary}\label{cor:alternating}
All alternating fibered links in $S^3$ have Properties~A,~B, and~C. Moreover, there are infinitely many almost alternating fibered links in $S^3$ that have Properties~A,~B, and~C.
\end{corollary}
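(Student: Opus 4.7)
The plan is to reduce both assertions to canonical fiberedness and then invoke Theorem~\ref{thm:gc}. Observe first that by Proposition~\ref{prop:mutual book presentation to mutually braided}, Property~C implies Property~B, and by \cite{Bode2019, bode:braided}, Property~B implies Property~A. Hence for both parts of the corollary it suffices to exhibit Property~C, and by Theorem~\ref{thm:gc} this reduces further to showing that the links in question are canonically fibered.

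For the first assertion, I would invoke the classical theorem of Murasugi and Crowell (the two references \cite{alternating1, alternating2}): whenever an alternating link in $S^3$ is fibered, applying Seifert's algorithm to any reduced alternating diagram produces a fiber surface. Thus every alternating fibered link is canonically fibered and the chain $\text{canonically fibered} \Rightarrow C \Rightarrow B \Rightarrow A$ delivers all three properties simultaneously.

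For the second assertion, I would turn to \cite{Stoimenow}, from which one extracts an infinite family of almost alternating fibered links whose fiber surfaces are given by Seifert's algorithm applied to an almost alternating diagram. The cleanest route is to pick a concrete infinite family from that reference (for instance, twisted or cabled modifications of a seed almost alternating fibered link whose Seifert genus is realized by Seifert's algorithm, so that canonical fiberedness is preserved under the modification), verify almost alternatingness of the chosen diagrams directly, and then feed the resulting canonical fiberedness into the same implication chain.

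The only substantive obstacle is in the second part: one must ensure the infinite family chosen is simultaneously (i) almost alternating, (ii) fibered, and (iii) canonically fibered, none of which is automatically preserved under standard operations. Consequently, the main task is to identify or assemble from \cite{Stoimenow} an explicit infinite list where all three conditions are transparent; once that is done, Theorem~\ref{thm:gc} together with Proposition~\ref{prop:mutual book presentation to mutually braided} and \cite{Bode2019, bode:braided} completes the argument with no further work.
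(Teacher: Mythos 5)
Your reduction is exactly the paper's: Property~C implies B implies A via Proposition~\ref{prop:mutual book presentation to mutually braided} and \cite{Bode2019, bode:braided}, and Theorem~\ref{thm:gc} reduces everything to canonical fiberedness. The first assertion is handled identically to the paper: by \cite{alternating1, alternating2}, Seifert's algorithm on an alternating diagram yields a minimal genus Seifert surface, so alternating fibered links are canonically fibered, and you are done.

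For the second assertion, your overall plan (cite \cite{Stoimenow} for infinitely many almost alternating fibered knots that are canonically fibered) is what the paper does, but you mischaracterize Stoimenow's examples. You claim the fiber surfaces ``are given by Seifert's algorithm applied to an almost alternating diagram''; in fact the interest of Stoimenow's family is precisely the opposite: the knots are almost alternating and canonically fibered, yet the canonical (minimal genus) Seifert surface does \emph{not} arise from Seifert's algorithm on any almost alternating diagram. Canonical fiberedness only requires the fiber surface to come from Seifert's algorithm on \emph{some} diagram, not necessarily an almost alternating one, so this still suffices for Theorem~\ref{thm:gc}. The speculative construction you sketch (twisting/cabling a seed example and re-verifying all three properties) is unnecessary and, as you yourself observe, would require substantial extra work; the paper avoids this entirely because Stoimenow's family already has all three properties built in. If you correct the mischaracterization and drop the unnecessary construction, your argument coincides with the paper's.
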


Using a family of fibered links constructed by Stoimenow~\cite{Stoimenow_infinite_family}, we obtain the first example of a fibered link that satisfies Property~A but is not a Hopf plumbing~\cite{Misev_2016}, i.e.\ it cannot be constructed from the unknot via a sequence of Hopf plumbings and deplumbings without performing Hopf deplumbing.

\begin{corollary}\label{cor:deplumbing}
There exists an infinite family of fibered links having Properties~A,~B, and~C that cannot be obtained from the unknot through Hopf plumbings.
\end{corollary}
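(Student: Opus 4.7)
The plan is to combine three existing inputs: Stoimenow's explicit infinite family of fibered links from \cite{Stoimenow_infinite_family}, Misev's obstruction from \cite{Misev_2016} showing that these links cannot be built from the unknot by a sequence of Hopf plumbings alone (so any realization as in Theorem~\ref{thm:harer} must use at least one Hopf deplumbing), and our own Corollary~\ref{cor:alternating}, which grants Properties~A,~B, and~C to every alternating fibered link in $S^{3}$ (and to infinitely many almost alternating ones).

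Concretely, I would first recall Stoimenow's construction and verify that the infinite family he produces falls into the scope of Corollary~\ref{cor:alternating}, i.e.\ consists of alternating fibered links (or, failing that, of almost alternating fibered links already known to satisfy the three properties). Applying Corollary~\ref{cor:alternating} to each member then delivers Property~C, while Property~B follows by Proposition~\ref{prop:mutual book presentation to mutually braided} and Property~A follows from the implications summarized in Figure~\ref{fig:flowchart}. Finally, I would invoke Misev's obstruction to rule out, uniformly across the family, the possibility of any member being obtained from the unknot through Hopf plumbings only.

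The main obstacle is bibliographic rather than mathematical: one needs to confirm that the same infinite family Stoimenow constructs is indeed covered by Corollary~\ref{cor:alternating} (as alternating or almost alternating fibered links), and that Misev's non-plumbing result in \cite{Misev_2016} applies uniformly to infinitely many of its members rather than to only a single distinguished example. Once these two compatibility checks between the two references are in place, the corollary is immediate and requires no further input from the machinery developed in this paper.
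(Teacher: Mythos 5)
Your plan diverges from the paper's proof at a crucial step and, as written, contains a genuine gap. You propose to obtain Properties~A,~B, and~C from Corollary~\ref{cor:alternating}, which would require the Stoimenow--Misev family to consist of alternating (or specifically covered almost alternating) fibered links. Nothing in the references asserts this, and you yourself flag the verification as outstanding. Worse, the route is likely to be self-defeating: the whole point of Misev's obstruction is to exhibit fibered links that are \emph{not} Hopf plumbings, and alternating fibered links are generally expected to be plumbings of Hopf bands (the canonical fiber surface of an alternating diagram decomposes into Murasugi summands coming from special alternating pieces). So if the family happened to be alternating, the non-plumbing conclusion you need from~\cite{Misev_2016} would be in tension with the very hypothesis you would be invoking.

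The paper's actual argument avoids this entirely by going through Theorem~\ref{thm:gc} rather than Corollary~\ref{cor:alternating}. Misev constructs an infinite family of Alexander polynomials that obstruct Hopf plumbing for two-component, genus-one fibered links, and Stoimenow~\cite{Stoimenow_infinite_family} realizes them by hyperbolic, fibered, two-component links of genus one. The diagrams in Figure~3.1 of~\cite{Misev_2016} give an \emph{upper} bound of one on the canonical genus, while the genus of each link is one by construction; hence genus and canonical genus coincide, the links are canonically fibered, and Theorem~\ref{thm:gc} delivers Property~C directly. Properties~B and~A then follow from Proposition~\ref{prop:mutual book presentation to mutually braided} and the chain of implications. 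To repair your proposal, replace the appeal to Corollary~\ref{cor:alternating} with this canonical-genus computation and Theorem~\ref{thm:gc}.
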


Secondly, we construct more fibered links possessing Property~B from known examples. Given two oriented surfaces in $S^3$, their Murasugi sum produces a new oriented surface and preserves fiberedness; that is, the resulting surface is fibered if and only if each summand is a fiber surface~\cite{Gabai, stallings}. In particular, braided Stallings plumbing is a specialized form of Murasugi sum that, as the name suggests, preserves both fiberedness and the underlying braid structure (see Figure~\ref{fig:stallings_plumbing}).

\begin{theorem}\label{thm:stallings_plumb}
Let $F_1$ and $F_2$ be braided pages of a braided open book $S^3$. Then any braided Stallings plumbing of $F_1$ and $F_2$ is a braided page of a braided open book.
\end{theorem}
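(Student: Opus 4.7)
\noindent\textit{Proof proposal.} The plan is to combine Gabai's theorem on the fiberedness of Murasugi sums with a local model for braided Stallings plumbing that preserves the two defining features of a braided open book: an unknot $U$ positively transverse to every page, together with a binding that is a closed braid with respect to the unbook on $U$. Both features are local to the plumbing patch, so if the patch is placed in a standard position relative to $U$, the braided structure passes through the Murasugi sum essentially for free.

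First, I would arrange the two braided open books with pages $F_1$ and $F_2$ to share a common braid axis. By placing them into disjoint $3$-balls in $S^3$ and then band-summing their respective braid axes $U_1$ and $U_2$ along an arc disjoint from $F_1 \cup F_2$, one obtains a single unknot $U$ such that each $F_i$ is positively transverse to every page of the unbook on $U$ (away from its binding), and each $\partial F_i$ is a closed braid around $U$. This is just the standard stabilisation procedure for identifying braid axes.

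Next, I would exploit the local model in Figure~\ref{fig:stallings_plumbing}: the plumbing rectangle $R$ has two opposite sides that are braid arcs of $\partial F_1$ and $\partial F_2$ (hence positively transverse to the pages of the unbook on $U$), and two opposite sides that are arcs in $F_1$ and $F_2$ lying in a common page of that unbook. The Murasugi sum $F_1 * F_2$ along $R$ is then built inside a ball in which transversality with the pages of the unbook on $U$ is manifestly preserved, so $F_1 * F_2$ remains positively transverse to every page of the unbook on $U$, and its boundary remains a closed braid around $U$.

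Gabai's theorem~\cite{Gabai, stallings} then implies that $F_1 * F_2$ is a fiber surface in $S^3$, so it is the binding of an open book, and the preceding step shows that this open book is braided with respect to $U$. The main technical obstacle is the verification at the end of the second step: one must check that Gabai's fibration on the complement of $\partial(F_1 * F_2)$ can be chosen so that its pages are ambient-isotopic to $F_1 * F_2$ by an isotopy supported away from $U$, so that transversality with the unbook on $U$ is preserved for all pages, not only the page $F_1 * F_2$ itself. This should follow from the fact that the two ingredient open books already enjoy this property and that the plumbing modification is supported in a small ball in which the local model is explicit; once established, no further analysis at the level of $P$-fibered polynomial loops is required.
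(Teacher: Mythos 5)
The gap you yourself flag at the end of the proposal is the real one, and it is not filled by the argument you sketch; the paper avoids it by working with a different, combinatorial object.

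Mutual braiding requires two things: (a) $\partial(F_1 *_f F_2)$ is a closed braid about the axis $U$, and (b) $U$ is positively transverse to \emph{every} page of the open book whose binding is $\partial(F_1 *_f F_2)$, not merely to the single page $F_1 *_f F_2$. Condition (a) is indeed local and easy. Condition (b), however, concerns the entire fibration that Gabai's theorem produces, and Gabai's theorem is an existence statement about fiberedness; it gives no a priori control over where the other fibers sit relative to $U$ outside the plumbing ball. Your concluding sentence (``This should follow from the fact that the two ingredient open books already enjoy this property and that the plumbing modification is supported in a small ball'') is the whole theorem restated, not a proof of it: the fibration of the Murasugi sum is assembled from the two ingredient fibrations by a global cut-and-paste construction, and one must actually track the sweep of pages on both sides of the plumbing disk to see that each stays transverse to $U$. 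Incidentally, the intermediate claims in your write-up that ``$F_i$ is positively transverse to every page of the unbook on $U$'' are not what is wanted (and are generally false: the disks of a braided surface lie essentially parallel to, not transverse to, the pages of the unbook); what you need is that $U$ is positively transverse to the pages of the fibration with binding $\partial F_i$, and likewise for $F_1*_f F_2$. You also do not engage with the dependence on the merger $f$, i.e.\ the many inequivalent ways of gluing.

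The paper's proof takes a genuinely different route precisely to sidestep this. It encodes the \emph{entire} mutual braiding of each $F_i$ in a Rampichini diagram $R_i$ --- a combinatorial object, from~\cite{generalized_exchangeable, generalized_exchangeable2}, that records all fibers at once, not just one page --- and then shows that gluing the right edge of $R_2$ (labels shifted by $n_1-1$) to the left edge of $R_1$ according to the merger $f$, and extending the curves under one another to the far edges, again produces a valid Rampichini diagram whose vertical sections read off the BKL word $B_1 *_f B_2$. The verification is a finite check that the transposition labels obey the Rampichini axioms (the boundary products, the edge-shift condition, and the conjugation rule at crossings). By the equivalence of~\cite{bode:braided}, a Rampichini diagram is exactly the certificate that the resulting open book is braided, so condition (b) for all pages is automatic. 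If you want to salvage the geometric approach, the missing ingredient is an explicit description of Gabai's fibration of a Murasugi sum that is compatible with the braid axis at every instant of the sweep; the Rampichini diagram is, in effect, a bookkeeping device that does exactly that.
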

Here the term \emph{braided page} refers to a page of an open book that is a braided surface, i.e.\ its boundary is braided with respect to some braid axis and it is given as the union of disks that are connected by half-twisted bands, each of which have a certain natural position relative to the braid axis, see~\cite{rudolph}.

\begin{corollary}\label{cor:connected_sum}
Let $L_1$ and $L_2$ be bindings of braided open books in $S^3$. 
Fix any component $K_1\subset L_1$ and any component $K_2\subset L_2$. 
Then the link obtained by performing a connected sum of $L_1$ and $L_2$ along $K_1$ and $K_2$ is also the binding of a braided open book in $S^3$.
\end{corollary}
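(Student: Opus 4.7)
\emph{Proof plan.} The strategy is to realize the connected sum $L_1 \#_{K_1, K_2} L_2$ as the boundary of a braided Stallings plumbing of braided pages of $L_1$ and $L_2$, and then to invoke Theorem~\ref{thm:stallings_plumb}.

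I would begin by recording the classical fact that connected sum of fibered links corresponds, at the level of fiber surfaces, to a Murasugi sum along a $2$-disk: a fiber surface of $L_1 \#_{K_1, K_2} L_2$ is the boundary-connected sum $F_1 \natural F_2$, obtained by gluing along a pair of arcs $\alpha_i \subset F_i$ that are boundary-parallel and have one endpoint on the chosen component $K_i \subset L_i$. This is the simplest instance of a Stallings plumbing.

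The heart of the argument is to choose these plumbing arcs so that the resulting plumbing is \emph{braided} in the sense required by Theorem~\ref{thm:stallings_plumb}. By hypothesis, each $F_i$ is a braided page, so $L_i$ is braided about some auxiliary unknot $U_i$ and $F_i$ admits a decomposition into disks joined by half-twisted bands in Rudolph's sense~\cite{rudolph}. Near $K_i$ I would take $\alpha_i$ to be a short arc lying in a single disk summand of this decomposition, with both endpoints on $K_i$ close to the braid axis. Since $K_i$ is already braided about $U_i$, such an arc is automatically in braided position, and a regular neighborhood of $\alpha_i$ supplies the half-twisted band datum required to identify the plumbing as a braided Stallings plumbing.

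Applying Theorem~\ref{thm:stallings_plumb} to this plumbing then produces a braided page of a braided open book whose boundary is, by construction, the desired connected sum. The step I expect to demand the most care is the previous one: verifying that the local picture near $K_i$ really does fit into Rudolph's braided-surface framework so that the plumbing is braided on the nose, rather than only after an isotopy that might break the braided structure. Because the arc is short and the nearby geometry of $F_i$ near $K_i$ is essentially local, this should reduce to unpacking definitions and exhibiting a standard local model, but the bookkeeping is where the real technical work of the corollary sits.
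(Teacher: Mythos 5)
Your high-level strategy — realize the connected sum as a braided Stallings plumbing of braided pages and then invoke Theorem~\ref{thm:stallings_plumb} — is precisely the route the paper takes. But where the paper works entirely combinatorially, you work geometrically, and the bookkeeping you correctly flag as the ``real technical work'' is exactly what the paper avoids.

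The paper's proof is short because it picks a very specific merger: the identity map $f$. With this choice the word $B_1*_fB_2$ literally splits as $B_1 \cdot i(B_2)$, with the first $\ell_1$ bands supported on disks $1,\dots,n_1$ and the last $\ell_2$ on disks $n_1,\dots,n_1+n_2-1$. The splitting $2$-sphere through disk $n_1$ then exhibits the closure as $L_1 \# L_2$ by inspection, and Theorem~\ref{thm:stallings_plumb} applies immediately. There is no need to exhibit a Murasugi $2$-gon or check that a plumbing arc sits in braided position: the braided-Stallings-plumbing structure is true by definition, since the definition of the plumbing is given in terms of mergers of BKL words.

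Your route — connected sum as boundary-connected sum of fiber surfaces along boundary-parallel arcs — is morally equivalent, but it leaves two things to be pinned down. First, the definition of braided Stallings plumbing requires the gluing disk to be the right-most disk of $F_1$ and the left-most disk of $F_2$; your arc $\alpha_i$ must therefore live on that particular disk of the Rudolph decomposition and not on ``a single disk summand'' in general, so you owe an argument that the chosen component $K_i$ meets that specific disk. Second, realizing \emph{arbitrary} chosen components $K_1$ and $K_2$ requires adjusting the BKL representatives (equivalently, cyclically permuting the strands so the relevant components hit the outermost disk), which is where Lemma~\ref{lem:translation} earns its keep — horizontal translation of the Rampichini diagram cyclically shifts strand indices while preserving the Rampichini structure. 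The paper admittedly compresses this into the remark that ``the components used for the connected sum are determined by the braids $B_1$ and $B_2$,'' but your version needs the same ingredient made explicit. Also, a small slip: you first say $\alpha_i$ has ``one endpoint on $K_i$'' before correctly saying ``both endpoints on $K_i$''; for a boundary-connected sum the arc must be boundary-parallel with both endpoints on $K_i$.

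So: same theorem cited, same structural plan, but your geometric realization needs to be reconciled with the combinatorial definition of braided Stallings plumbing — which is exactly why the paper opts to stay entirely in the land of BKL words and mergers, where the plumbing is braided by fiat.
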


Satellite operations of P-fibered braids have been studied before, using analytic tools~\cite{bode:sat}. The equivalence between the four different definitions of braided open books~\cite{bode:braided} now allows a satisfying statement with a purely topological argument.

\begin{theorem}\label{thm:sat}
Let $L_1$ be a fibered knot that is the binding of a braided open book in $S^3$. Let $B$ be a P-fibered braid. In particular, its closure in $S^3$ is the binding of a braided open book. Write $L_2$ for the closure of $B$ in the solid torus. Then the satellite of $L_1$ by $L_2$ (in the Seifert framing) is also the binding of a braided open book in $S^3$.
\end{theorem}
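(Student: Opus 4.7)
The plan is to use the equivalence established in~\cite{bode:braided} between bindings of braided open books in $S^3$ and closures of $P$-fibered braids, thereby reducing the theorem to a braid-theoretic statement: if $B_1$ is a $P$-fibered braid on $n$ strands whose closure is the knot $L_1$ (with writhe $w$), and $B$ is a $P$-fibered braid on $m$ strands, then there exists a $P$-fibered braid whose closure in $S^3$ is the Seifert-framed satellite of $L_1$ by $L_2$.

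First I would construct the candidate braid by cabling. Let $B_1[B]$ denote the $nm$-strand cable braid obtained by replacing each strand of $B_1$ with $m$ parallel strands and inserting a copy of $B$ in the cable; its closure realizes the satellite of $L_1$ by $L_2$ in the \emph{braid framing} of $L_1$. Since the braid framing and Seifert framing of $L_1$ differ by $w$, I would insert $-w$ copies of the full twist $\Delta_m^{2}$ into the $m$-strand cable, yielding a braid $\tilde B$ whose closure in $S^3$ is the Seifert-framed satellite $L$.

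To show that $\tilde B$ is $P$-fibered, I would pass back to the braided open book picture. The braided open book on $L_1$ provides a fibration $\pi_{\mathrm{out}}\colon S^3 \setminus L_1 \to S^1$ with Seifert-surface pages, together with an unknotted braid axis $U$ positively transverse to every page. Choose a thin tubular neighborhood $N(L_1)$ disjoint from $U$ and trivialized by the Seifert framing. Inside $N(L_1)$, embed $L_2$ via this trivialization; the $P$-fibered structure on $B$, combined with the full-twist correction, produces a fibration $\pi_{\mathrm{in}}\colon N(L_1)\setminus L_2 \to S^1$ whose pages meet $\partial N(L_1)$ in Seifert longitudes of $L_1$. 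Gluing $\pi_{\mathrm{out}}|_{S^3\setminus N(L_1)}$ and $\pi_{\mathrm{in}}$ along $\partial N(L_1)$ yields a fibration of $S^3 \setminus L \to S^1$. The braid axis $U$ remains positively transverse to the new pages (it is disjoint from $N(L_1)$), and since $N(L_1)$ is thin, $L \cap N(L_1)$ inherits positive angular velocity around $U$ from $L_1$; hence $L$ is braided relative to the unbook on $U$. Applying the equivalence once more yields that $\tilde B$ is $P$-fibered.

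The main obstacle is the slope matching on $\partial N(L_1)$. For a monic polynomial loop realizing $B$, the inner fibration restricts on $\partial V$ to a map of degree $(m,0)$ in (meridian, longitude) coordinates, while the outer fibration restricts on $\partial N(L_1)$ to degree $(1,0)$. Inserting $-w$ full twists corrects the slope but leaves a degree-$m$ mismatch; resolving this requires absorbing the factor of $m$ into the combined page structure, so that each combined page consists of one outer page glued to $m$ inner pages along longitudes. One must verify carefully that this construction preserves positive transversality of $U$ (arranged by choosing $N(L_1)$ thin enough) and that the full-twist insertion is compatible with the $P$-fibered structures, ultimately using that $\Delta_m^{2}$ is itself $P$-fibered and that cabling is the topological counterpart of the analytic composition of~\cite{bode:sat}.
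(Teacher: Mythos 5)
Your overall geometric strategy matches the paper's: keep the braid axis $U$ of the braided open book on $L_1$, build a fibration of the satellite complement by gluing an inner fibration on $N(L_1)\setminus L_2$ (coming from the $P$-fibered structure of $B$) to an outer fibration on $S^3\setminus N(L_1)$ (coming from the fibration $\Psi$ of $L_1$), and then check mutual braiding of the satellite $L$ and $U$. You also correctly identified the crux of the problem: the boundary degree mismatch, since the inner fibration $\arg g$ restricts to a degree-$(m,0)$ map on $\partial N(L_1)$ (where $m$ is the number of strands of $B$) while $\Psi$ restricts to degree $(1,0)$.

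However, your proposed fix of the mismatch is backwards. You suggest that ``each combined page consists of one outer page glued to $m$ inner pages along longitudes,'' but one outer page of $\Psi$ meets $\partial N(L_1)$ in a single longitude, whereas a single inner page of $\arg g$ already meets $\partial N(L_1)$ in $m$ longitudes; gluing one outer page to $m$ inner pages would only make the boundary mismatch worse. The paper's resolution is the opposite and cleaner: replace the outer fibration $\Psi$ by $\Phi=\Psi^{m}$ on $S^3\setminus N(L_1)$. Then each level set of $\Phi$ outside is a union of $m$ pages of $\Psi$, meeting $\partial N(L_1)$ in exactly $m$ longitudes, which matches the boundary of a single inner page of $\arg g$; the Seifert-framing hypothesis is precisely what lets these longitudes line up. (The positive transversality of $U$ is unaffected by passing from $\Psi$ to $\Psi^m$, since level sets of $\Psi^m$ are unions of level sets of $\Psi$.) The paper works directly with fibration maps and thereby also avoids the braid-theoretic detour through $B_1[B]$, the full-twist correction $\Delta_m^{-2w}$, and the need to verify separately that $\Delta_m^2$ is $P$-fibered; none of that bookkeeping is needed once the Seifert framing is built into the definition of the satellite. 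Finally, note that you do not actually need Misev or an a priori fiberedness argument for $L$, since the glued $\Phi$ exhibits the open book explicitly.
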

The same argument as in the proof of Theorem~\ref{thm:sat} works if $L_1$ has $k$ components and we perform a satellite operation on each component by a closed P-fibered braid $B_i$, $i=1,2,\ldots,k$, all of which have the same number of strands. 

The operation of braided Stallings plumbing and the satellite operation can be used to create new examples of fibered links that have Property~B (and thus Property~A).
For example, the $(2,1)$-cables of the trefoil and of the figure-eight knot have genera $2$, but their canonical genera are at least $3$. Moreover, they are fibered links that require Hopf deplumbing~\cite{melvin_morton}. Therefore, it does not follow from Theorem~\ref{thm:gc} or from~\cite{montesions_morton} (that they have Property~A) but from Theorem~\ref{thm:sat} that they have Property~B.

\begin{corollary}\label{cor:not_canonically_fibered}
    There exists a (possibly infinite) family of fibered cable knots having Properties~A and~B that are not canonically fibered.
\end{corollary}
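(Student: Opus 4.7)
The plan is to use the two examples the authors highlight just before the statement, namely the $(2,1)$-cables of the trefoil and of the figure-eight knot, and promote them to the setting of braided open books via Theorem~\ref{thm:sat}. Property~A will then follow from Property~B via the implication of Figure~\ref{fig:flowchart}, and failure of canonical fiberedness will be imported from~\cite{melvin_morton}.

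In detail, the trefoil and the figure-eight knot are fibered of braid index at most $3$, so by the first-named author's earlier result~\cite{bode:braided} each is the binding of a braided open book; that is, each has Property~B. The $(2,1)$-cable pattern is the closure in the solid torus of the positive $2$-braid $\sigma_1 \in B_2$, which is homogeneous and therefore P-fibered. Applying Theorem~\ref{thm:sat} with $L_1$ the trefoil (respectively the figure-eight) and $B = \sigma_1$ then gives that the $(2,1)$-cable in the Seifert framing is again the binding of a braided open book, so it has Property~B, and consequently Property~A by the chain of implications summarized in Figure~\ref{fig:flowchart}.

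To see that these cables are not canonically fibered, I would invoke~\cite{melvin_morton}: both knots are fibered of genus $2$, but their canonical genera are at least $3$. Since the fiber of a fibered knot is the unique (up to isotopy) minimal-genus Seifert surface, no surface produced by Seifert's algorithm on any diagram of these knots can be a fiber, so neither knot is canonically fibered. This already delivers two examples with all three required properties.

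For an honest infinite family, I would vary the companion, taking $L_1$ to range over an infinite collection of fibered knots of braid index at most $3$ (for example, the $(2,2k+1)$-torus knots) and form the $(2,1)$-cable in each case. Theorem~\ref{thm:sat} combined with~\cite{bode:braided} guarantees Properties~A and~B uniformly, so the main obstacle is checking that infinitely many of these cables have canonical genus strictly greater than their genuine genus. I expect this to follow from a Bennequin-type lower bound on the canonical genus of a cable in terms of data of the companion, in the spirit of~\cite{melvin_morton}; this is the step I would devote the most care to, since the arguments for Properties~A and~B themselves are essentially immediate from Theorem~\ref{thm:sat} once Property~B of the companion is in hand.
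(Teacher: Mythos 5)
Your proposal follows essentially the same route as the paper: take $(2,1)$-cables and apply Theorem~\ref{thm:sat} to propagate Property~B (and hence Property~A) from the companion to the cable, then import the failure of canonical fiberedness from a lower bound on canonical genus. The two correctly verified examples you start with -- the $(2,1)$-cables of the trefoil and of the figure-eight knot, for which~\cite{melvin_morton} gives genus $2$ but canonical genus at least $3$ -- already establish the statement, since a family can be finite and the parenthetical ``(possibly infinite)'' in the statement is there precisely because neither your argument nor the paper's pins down the canonical genus for all members of an infinite family.

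Where the paper differs in execution is that it works directly with the family of $(2,1)$-cables of $T_{2,2n+1}$ (so the trefoil cable is its $n=1$ case; the figure-eight cable is not included), computes the fiber genus $2n$ by hand via Schubert, and verifies via a Sage/HOMFLY--PT computation (using the bound $2g_c(L)\geq \max\deg_z P_L$ from~\cite{gc_lower_bound}) that the canonical genus is at least $4n-1$ for $1\leq n\leq 50$. The authors explicitly say they have not proved this bound for $n>50$, which matches the gap you flag. So your ``step I would devote the most care to'' is exactly the step the paper handles computationally rather than with a general Bennequin-type estimate; the rest of your argument -- $\sigma_1\in B_2$ being P-fibered as a homogeneous braid, Theorem~\ref{thm:sat} giving Property~B for the Seifert-framed cable, Property~A following via the flowchart -- is the same reasoning the paper uses.
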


\subsection*{Outline}
In Section~\ref{sec:def}, we recall the classical definitions of braids and arc presentations. While braided open books generalize braids, we introduce \emph{mutual arc presentations} as the natural generalization of arc presentations. Proposition~\ref{prop:mutual book presentation to mutually braided}, which establishes a connection between these two concepts, is presented. Section~\ref{sec:canonical} provides an algorithm for constructing mutual arc presentations of canonically fibered links, proving Theorem~\ref{thm:gc}, and Section~\ref{sec:operations} focuses on operations that preserve Property~B, yielding previously unknown examples of links with Property~B.

\subsection*{Acknowledgment}
The first-named author is grateful to the Berlin Mathematics Research Center MATH+ and Marc Kegel for making his visits to Berlin possible.

The second-named author would like to thank his advisor, Marc Kegel, for his support and the Universidad Polit\'ecnica de Madrid for its hospitality during his visit, which was funded by the Humboldt-Universit\"at zu Berlin. He gratefully acknowledges support from the Claussen-Simon-Stiftung through the Dissertation Plus Scholarship and is a member of the Berlin Mathematics Research Center MATH+ (EXC-2046/1, project ID: 390685689), funded by the Deutsche Forschungsgemeinschaft (DFG) under Germany's Excellence Strategy.

\section{From braided open books to mutual arc presentations}\label{sec:def}
The notion of an arc presentation is conceptually related to that of a braid. While a braided oriented link is transverse to the pages of the unbook, an arc presentation is tangent to the pages of the unbook. Like in the case of braids~\cite{alexander}, it is known that every link admits an arc presentation~\cite{Cromwell}. We generalize the definition of a braid to braidings relative to open books other than the unbook, which allows the extension to more general $3$-manifolds and the concept of mutual braiding. Similarly, we want to generalize the definition of an arc presentation and introduce a notion of a mutual arc presentation, where two links are simultaneously (generalized) arc presentations relative to each other.

\subsection{Braided open books}
Let $M^3$ be a closed, oriented $3$-manifold. Given an embedded, oriented surface $\mathcal{S}$ in $M$ and an oriented link $L$ in $M$. We say that $L$ is \emph{positively transverse} to $\mathcal{S}$ if $L$ and $\mathcal{S}$ are transverse and the sign of every intersection point is positive.

An oriented link in $S^3$ is \emph{braided relative} to a braid axis $O$ if there is an unknot $O\subset S^3\backslash L$ such that $L$ is positively transverse to all pages of the unbook with binding $O$. We extend this definition to other open books in general closed, oriented $3$-manifolds.

\begin{definition}\label{def:braided}
    Let $L\subset M$ be an oriented link and let $\operatorname{OB}$ be an open book decomposition on $M$.
    \begin{enumerate}[(i)]
        \item $L$ is \emph{braided relative to} (the binding of) $\operatorname{OB}$ if $L$ is positively transverse to the pages of $\operatorname{OB}$. 
        \item If $L$ is the binding of another open book $\operatorname{OB}'$, we say that $L$ and the binding $B$ of $\operatorname{OB}$ are \emph{mutually braided} if $L$ is braided relative to $\operatorname{OB}$ and $B$ is braided relative to $\operatorname{OB}'$.
    \end{enumerate}
\end{definition}

If a fibered link $L$ in $S^3$ and an unknot are mutually braided, they are also said to be generalized exchangeable~\cite{generalized_exchangeable, generalized_exchangeable2}.

\begin{definition}\label{def:braided obd}
An open book in $S^3$ with binding $L$ is said to be \emph{braided} (relative to a braid axis $O$) if there is an unknot $O\subset S^3\backslash L$ such that $L$ and $O$ are mutually braided.
\end{definition} 
There are other definitions of a braided open book in $S^3$, which have been shown to be equivalent to this one~\cite{bode:braided}. It is not known if every open book in $S^3$ can be braided.

\subsection{Mutual arc presentations}
An \emph{arc presentation} of a link $L\subset S^3$ is a representative of its isotopy class where each connected component intersects the unknot $O$ in a finite number of times and such that each segment of $L$, i.e.\ each arc between two intersection points with $O$, lies on a page of the unbook with binding $O$.

\begin{definition}[Definition~2.1 in~\cite{baader}]
    Given $\mathcal{S} \subset M$ an embedded, connected, compact, oriented surface with boundary, let $N(\mathcal{S})$ be a closed tubular neighborhood with boundary of $\mathcal{S}$, 
    parameterized by $\tau \colon \mathcal{S} \times [-1, 1] \to N(\mathcal{S})$. Let $h \colon \mathcal{S} \to [0, 1]$ be a smooth function which is zero on the boundary of $\mathcal{S}$ 
    and positive on its interior. The image $\mathcal{LS}$ of the map $\ell \colon \mathcal{S} \times [-1, 1] \to N(\mathcal{S}), \ell(p, t) = \tau\big(p, h(p) \cdot t\big),$ together with its structure as a fibration $\mathcal{LS} \setminus \partial\mathcal{S} \to [-1, 1]$ given by the parameter $t$, is called a \emph{lens thickening} of $\mathcal{S}$.
\end{definition}

Let $\mathcal{S}_0$ be the page of an open book on $M$ (in particular, a fiber of $M\setminus \partial\mathcal{S}_0 \to S^1$), and suppose $\mathcal{S}_0$ is the fiber over $0\in[-1,1]\subset S^1$, then the union of all fibers over $[-1,1]$ is a lens thickening of $\mathcal{S}_0$.

\begin{definition}\label{def:mutual arc}
    Let $L_1,L_2\subset M$ be links and let $\mathcal{S}_1,\mathcal{S}_2\subset M$ be embedded, connected compact oriented surfaces with boundaries $L_1$ and $L_2$, respectively. 
    \begin{enumerate}[(i)]
        \item A \emph{generalized arc presentation} of $L_1$ relative to $\mathcal{S}_2$ is a representative of the isotopy class of $L_1$ that intersects $L_2$ in finitely many points and such that each segment, i.e.\ each arc between two intersection points with $L_2$, lies on some fiber of the lens thickening of $\mathcal{S}_2$. If $L_2$ is fibered and $\mathcal{S}_2$ is the page of an open book $\operatorname{OB}_2$, then we also say that $L_1$ has a generalized arc presentation relative to the open book $\operatorname{OB}_2$.
        \item We say that $L_1$ and $L_2$ have a \emph{mutual arc presentation} relative to $\mathcal{S}_2$ and $\mathcal{S}_1$ if $L_1$ is a generalized arc presentation relative to $\mathcal{S}_2$ and $L_2$ is a generalized arc presentation relative to $\mathcal{S}_1$. If both $L_1$ and $L_2$ are fibered and $\mathcal{S}_1$ and $\mathcal{S}_2$ are pages of open books $\operatorname{OB}_1$ and $\operatorname{OB}_2$, respectively, we say that the open books $\operatorname{OB}_1$ and $\operatorname{OB}_2$ have a mutual arc presentation.
    \end{enumerate}
\end{definition}

In the case where all links involved are bindings of open books and the surfaces $\mathcal{S}_1$ and $\mathcal{S}_2$ are pages of the corresponding open books, the fibers of the lens thickenings of $\mathcal{S}_1$ and $\mathcal{S}_2$ are also pages of the respective open books. Thus in a generalized arc presentation of a link $L_1$ relative to an open book $\operatorname{OB}_2$ all arcs of $L_1$ lie on some page of $\operatorname{OB}_2$, justifying our terminology that emphasizes the position of $L_1$ relative to an entire open book. Note that if $M=S^3$, $L_2$ is the unknot and $\mathcal{S}_2$ is the disk, then the notion of a generalized arc presentation relative to $\mathcal{S}_2$ is exactly the usual concept of an arc presentation.



\subsection{Back to braided open books}
The following result shows that Conjecture~\ref{conj:mutual_braid} on braided open books is implied by our Conjecture~\ref{conj:mutual_arc} on mutual arc presentations, proving Proposition~\ref{prop:mutual book presentation to mutually braided} in a more general setting.
\begin{proposition}\label{prop: detailed mutual book presentation to mutually braided}
    Given a generalized arc presentation of a link $L\subset M^3$ relative to an open book $\operatorname{OB}$ on $M$, the link $L$ can be isotoped to become braided relative to $\operatorname{OB}$. Moreover, if $L$ is fibered and the corresponding open book $\operatorname{OB}'$ has a mutual arc presentation with $\operatorname{OB}$, then $L$ and the binding $B$ of $\operatorname{OB}$ can be isotoped into mutually braided links.
\end{proposition}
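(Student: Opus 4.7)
The plan is to adapt the classical procedure of converting a (classical) arc presentation into a braid, carried out locally and extended from the unbook in $S^3$ to an arbitrary open book on $M$. The underlying geometric fact is that near the binding $B$ of $\operatorname{OB}$ there is a tubular neighborhood $N(B)\cong B\times D^2$ compatible with $\operatorname{OB}$ in which the pages appear as half-disks parametrized by an angular coordinate $\theta\in S^1$. The given generalized arc presentation of $L$ consists of finitely many transverse intersection points $p_1,\dots,p_m\in L\cap B$, ordered cyclically along $L$, together with arcs $a_1,\dots,a_m$ with $a_i$ running from $p_{i-1}$ to $p_i$, each $a_i$ lying on a single page $P_{\theta_i}$ of $\operatorname{OB}$.

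The isotopy of $L$ will combine two types of local modifications, with a small parameter $\delta>0$. First, at each binding vertex $p_j$ we \emph{round off the corner}: inside a small ball around $p_j$ we replace the pair of half-arcs of $L$ meeting at $p_j$ by a short smooth curve $\gamma_j$ that is disjoint from $B$ and whose angular coordinate $\theta$ increases strictly monotonically from $\theta_j+\delta$ to $\theta_{j+1}-\delta$, wrapping positively around $S^1$ if necessary so that the direction of increase is consistent with the orientation of $L$. This is possible using the product structure of $N(B)$. Second, on each remaining arc $a_i$ we use the fibration of the lens thickening of $P_{\theta_i}$ to \emph{tilt} $a_i$ so that $\theta$ varies strictly monotonically from $\theta_i-\delta$ to $\theta_i+\delta$ along $a_i$. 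By construction, the endpoints of the tilted $a_i$ match the endpoints of $\gamma_{i-1}$ and $\gamma_i$, and the resulting link is disjoint from $B$ with strictly monotonic $\theta$ along its length, hence it is positively transverse to every page of $\operatorname{OB}$ and thus braided relative to $\operatorname{OB}$.

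For the second assertion, we apply the analogous modifications simultaneously to $L$ (to braid it relative to $\operatorname{OB}$) and to $B$ (to braid it relative to $\operatorname{OB}'$, whose binding is $L$). Away from the binding vertices $p_j$, the arcs of $L$ and those of $B$ are disjoint, so their respective tilting modifications are supported in disjoint tubular neighborhoods. Near each $p_j$ we work in a single local coordinate chart in which $L$ and $B$ meet transversely at $p_j$: the rounding of $L$ is supported in a thin disk through $L\cap N(p_j)$ transverse to $B$, while the rounding of $B$ is supported in a thin disk through $B\cap N(p_j)$ transverse to $L$, and the two disks meet only at $p_j$. The main obstacle will be ensuring the mutual compatibility of the two braiding conditions: isotoping $L$ deforms the open book $\operatorname{OB}'$ (carried along with its binding), and we need $B$ to remain in a position admitting the analogous braiding construction. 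The key point is that all modifications can be made arbitrarily $C^0$-small and supported in tubular neighborhoods of $L$ and $B$; by performing the rounding modifications in coordinated local charts around each $p_j$ and the tilting modifications on disjoint neighborhoods of the arcs, a single ambient isotopy produces a configuration simultaneously positively transverse to both open book structures.
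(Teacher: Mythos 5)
Your first half is essentially the paper's argument, just phrased differently: rounding the corner of $L$ at each intersection point $p_j$ into a curve whose $\theta$-coordinate is strictly increasing is the same as pushing the segment $L_j$ onto the correctly oriented half of the boundary circle $\partial D_j$ of the little disk transverse to $B$, and tilting the remaining arcs is the paper's small perturbation. So Part 1 is fine.

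The second half has a genuine gap, and it lies precisely in the claim that ``a single ambient isotopy produces a configuration simultaneously positively transverse to both open book structures.'' An ambient isotopy applied to $L\cup B$ preserves the intersection $L\cap B$; but mutually braided links must be disjoint, so no single ambient isotopy of $L\cup B$ can ever produce a mutually braided configuration from a mutual arc presentation. This is exactly what the remark following the proposition in the paper cautions against: the isotopies of $L$ and of $B$ are two \emph{separate} isotopies, performed one after the other. Once one accepts that, the real work is to show that after $L$ has been pushed off $B$ (dragging the open book $\operatorname{OB}'$ along with it), there is still room to push $B$ off the \emph{deformed} $L$ so that $B$ becomes positively transverse to the pages of the \emph{deformed} $\operatorname{OB}'$. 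You flag this as ``the main obstacle'' but then dispose of it by appealing to $C^0$-smallness and to the two local disks ``meeting only at $p_j$.'' Neither claim does the job: $C^0$-smallness does not give disjointness or the correct sign of transversality, and the two disks (one containing $L_j$ and normal to $B_j$, the other containing $B_j$ and normal to $L_j$) actually meet in a full diameter through $p_j$, not just at $p_j$. The missing step is the orientation bookkeeping that the paper carries out on the small sphere around $p_j$: after the first isotopy, the deformed arc of $L$ occupies one of the two halves of $\partial D_j$, and one must check that the half of $\partial D_j'$ whose orientation agrees with that of $B$ is disjoint from that deformed arc and is positively transverse to the (deformed) pages of $\operatorname{OB}'$. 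Without this verification the proof is incomplete.
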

\begin{remark}
The isotopies of $L$ and $B$ in the second part of the proposition are two separate isotopies, not one isotopy of $L\cup B$. Otherwise, the result would still have intersection points between the two links. Instead, we may first deform $L$ into a link that is braided relative to $OB$ and then deform $B$ into a link that is braided relative to $OB'$.
\end{remark}
\begin{proof}
    Let $\{p_1,\dots,p_n\}$ be the set of intersection points $L\cap B$, where $B$ is the binding of the given open book $\operatorname{OB}$. Without loss of generality, assume in a small neighborhood of $p_i$, $L$ and $B$ are two line segments, denoted by $L_i$ and $B_i$ respectively, intersecting at their midpoints at $p_i$, for all $i=1,\dots,n$. Also, assume all these line segments are of equal length $2r$. A schematic of the situation is captured in Figure~\ref{fig:arc presentation}. Let $D_i$ denote a disk of radius $r$, centered at $p_i$, that contains $L_i$ and is orthogonal to $B_i$. The orientation of $B$ induces an orientation on $\partial D_i$ according to the right-hand rule. Let $D_i'$ denote a disk of radius $r$, centered at $p_i$, that contains $B_i$ and is orthogonal to $L_i$. Similarly, the orientation of $L$ induces an orientation on $\partial D_i'$. Observe that the behavior of the pages of an open book near its binding implies that $\partial D_i$ is positively transverse to the pages of $\operatorname{OB}$.
    
    Let $L_{i,j}\subset L\setminus \cup _iL_i$ be the arc that connects $L_i$ and $L_j$ for some distinct $i,j\in\{1,\dots,n\}$, then $L_i$ and $L_j$ intersect $B$ with opposite signs. Isotope each $L_i$ along $D_i$ to $\partial D_i$ such that the orientations of $L_i$ and $\partial D_i$ agree, keeping $L\setminus \cup _iL_i$ fixed. Since $L$ was in a generalized arc presentation relative to $\operatorname{OB}$, each arc of $L\setminus \cup _iL_i$ remains on a page of $\operatorname{OB}$ after the isotopy, and each $L_i$ becomes positively transverse to the pages of $\operatorname{OB}$ as shown in Figure~\ref{fig:arc on the page}. Finally, by a small perturbation as in Figure~\ref{fig:small perturbation} of (each such arc) $L_{i,j}$, $L$ transversely intersects the pages of $\operatorname{OB}$, i.e.\ it becomes braided relative to $\operatorname{OB}$.
    
    \begin{figure}[ht]
        \begin{subfigure}{0.25\textwidth}
                \renewcommand\captionlabelfont{}
                \centering
                \def\svgwidth{0.9\textwidth}
                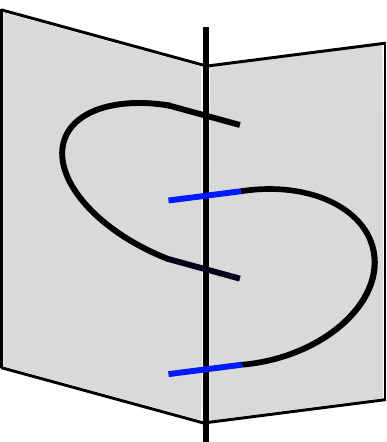
                \caption{}
                \label{fig:arc presentation}
        \end{subfigure}
            \hfill
        \begin{subfigure}{0.25\textwidth}
                \renewcommand\captionlabelfont{}
                \centering
                \def\svgwidth{0.9\textwidth}
\begingroup%
  \makeatletter%
  \providecommand\color[2][]{%
    \errmessage{(Inkscape) Color is used for the text in Inkscape, but the package 'color.sty' is not loaded}%
    \renewcommand\color[2][]{}%
  }%
  \providecommand\transparent[1]{%
    \errmessage{(Inkscape) Transparency is used (non-zero) for the text in Inkscape, but the package 'transparent.sty' is not loaded}%
    \renewcommand\transparent[1]{}%
  }%
  \providecommand\rotatebox[2]{#2}%
  \newcommand*\fsize{\dimexpr\f@size pt\relax}%
  \newcommand*\lineheight[1]{\fontsize{\fsize}{#1\fsize}\selectfont}%
  \ifx\svgwidth\undefined%
    \setlength{\unitlength}{185.51384787bp}%
    \ifx\svgscale\undefined%
      \relax%
    \else%
      \setlength{\unitlength}{\unitlength * \real{\svgscale}}%
    \fi%
  \else%
    \setlength{\unitlength}{\svgwidth}%
  \fi%
  \global\let\svgwidth\undefined%
  \global\let\svgscale\undefined%
  \makeatother%
  \begin{picture}(1,1.1433895)%
    \lineheight{1}%
    \setlength\tabcolsep{0pt}%
    \put(0,0){\includegraphics[width=\unitlength,page=1]{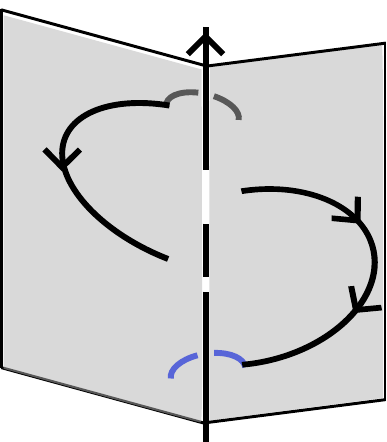}}%
    \put(0.04167049,0.03362449){\makebox(0,0)[lt]{\lineheight{1.25}\smash{\begin{tabular}[t]{l}$\operatorname{OB}$\end{tabular}}}}%
    \put(0,0){\includegraphics[width=\unitlength,page=2]{arc_presentation_to_braid.pdf}}%
    \put(0.55476029,0.70586509){\color[rgb]{1,0,0}\makebox(0,0)[lt]{\lineheight{1.25}\smash{\begin{tabular}[t]{l}$B_i$\end{tabular}}}}%
    \put(0,0){\includegraphics[width=\unitlength,page=3]{arc_presentation_to_braid.pdf}}%
    \put(0.55512588,0.25867364){\color[rgb]{1,0,0}\makebox(0,0)[lt]{\lineheight{1.25}\smash{\begin{tabular}[t]{l}$B_j$\end{tabular}}}}%
    \put(0.79389887,0.68402278){\makebox(0,0)[lt]{\lineheight{1.25}\smash{\begin{tabular}[t]{l}$L_{i,j}$\end{tabular}}}}%
    \put(0,0){\includegraphics[width=\unitlength,page=4]{arc_presentation_to_braid.pdf}}%
    \put(0.46498413,1.08611063){\makebox(0,0)[lt]{\lineheight{1.25}\smash{\begin{tabular}[t]{l}$B$\end{tabular}}}}%
  \end{picture}%
\endgroup%

                \caption{}
                \label{fig:arc on the page}
        \end{subfigure}
            \hfill
        \begin{subfigure}{0.4\textwidth}
                \renewcommand\captionlabelfont{}
                \centering
                \def\svgwidth{0.9\textwidth}
                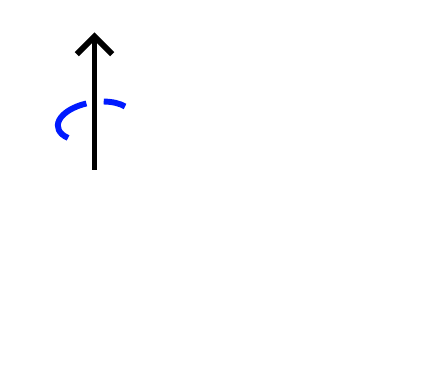
                \caption{}
                \label{fig:small perturbation}
            \end{subfigure}
        \caption{From a generalized arc presentation of $L$ in $\operatorname{OB}$ to a braiding of $L$ relative $\operatorname{OB}$.}
    \end{figure}

    Now we assume $L$ to be fibered, i.e.\ the binding of an open book $\operatorname{OB}'$ and the given generalized arc presentation of $L$ relative to $\operatorname{OB}$ to be a mutual arc presentation. After isotoping $L$ as above to be braided relative to $\operatorname{OB}$, it remains to show that $B$ can be isotoped, while fixing $L$, such that $B$ becomes braided relative to the open book with binding $L$.

The initial situation, before the isotopy of $L$, is depicted in Figures~\ref{fig:nbhd of p_i1} and~\ref{fig:nbhd of p_i2}. The segments $L_i$ and $B_i$ intersect in a point $p_i$. The behavior of an open book near its binding implies that there is a small sphere $S$ around $p_i$ such that the pages of $\operatorname{OB}'$ intersect $S$ in lines of longitude (interpreting the intersection points of $S$ and $L_i$ as north and south pole). Likewise, the pages of $\operatorname{OB}$ intersect $S$ in lines of longitude, when the intersection points of $S$ and $B_i$ are interpreted as north and south pole, see Figure~\ref{fig:nbhd of p_i2}.

The isotopy of $L$ described above can be taken to occur in the ball that is bounded by $S$ and that contains $p_i$. The isotopy moves $L_i$ to $\partial D_i$, i.e.\ to the equator of the sphere (when the intersection points of $S$ and $B_i$ are interpreted as north and south pole). In particular, (as seen above) the deformed $L_i$ is positively transverse to the lines of longitudes on $S$, the intersection of the pages of $\operatorname{OB}$ with $S$. This isotopy of $L_i$ extends to an isotopy of the open book $\operatorname{OB}'$. However, we can assume without loss of generality that outside of a neighborhood of the deformed $L_i$ in $S$ the intersection between the pages of $\operatorname{OB}'$ and $S$ are unchanged, see Figure~\ref{fig:isotopy1}. 

    
    The intersection points of $B_i$ with $S$ divide $\partial D_i'$ into two segments. Note that the segment of $\partial D_i'$ whose orientation agrees with the orientation of $B_i$ does not intersect the deformed $L_i$.
    Therefore, following an isotopy of $B_i$ along $D_i'$ to $\partial D_i'$ such that the orientations of $B_i$ and $\partial D_i'$ agree, $B_i$ becomes positively transverse to the pages of the open book with binding $L$, as shown in the Figure~\ref{fig:isotopy2}. Similarly, this isotopy of $B_i$ preserves $L_i$ being positively transverse to the pages of $\operatorname{OB}$. Finally, by a small perturbation of $B$, similar to the one in Figure~\ref{fig:small perturbation} but with the roles of $B$ and $L$ reversed, $B\setminus \cup_iB_i$ and hence $B$ becomes positively transverse to the fibers of $L$.
    \begin{figure}[ht]
        \begin{subfigure}{0.24\textwidth}
                \renewcommand\captionlabelfont{}
                \centering
                \includegraphics[width=\textwidth]{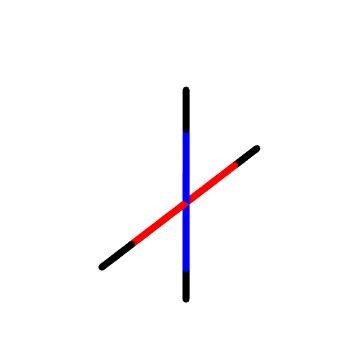}
                \caption{}
                \label{fig:nbhd of p_i1}
        \end{subfigure}
        \begin{subfigure}{0.24\textwidth}
                \renewcommand\captionlabelfont{}
                \centering
                \includegraphics[width=\textwidth]{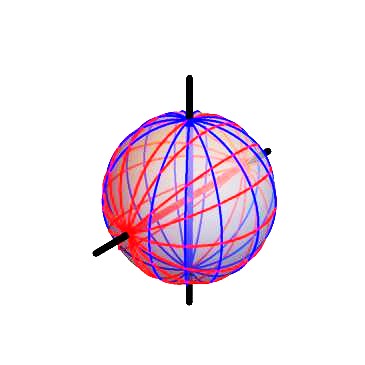}
                \caption{}
                \label{fig:nbhd of p_i2}
        \end{subfigure}
        \begin{subfigure}{0.24\textwidth}
                \renewcommand\captionlabelfont{}
                \centering
                \includegraphics[width=\textwidth]{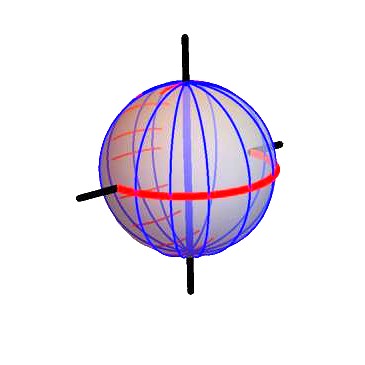}
                \caption{}
                \label{fig:isotopy1}
        \end{subfigure}
        \begin{subfigure}{0.24\textwidth}
                \renewcommand\captionlabelfont{}
                \centering
                \includegraphics[width=\textwidth]{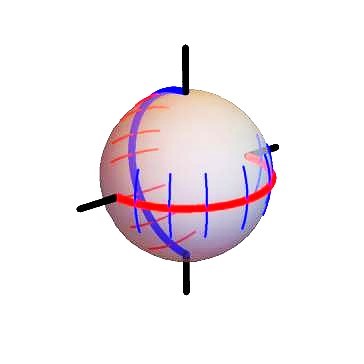}
                \caption{}
                \label{fig:isotopy2}
        \end{subfigure}
        \caption{The deformation of $L$ and $B$ near an intersection point. a) $L$ and $B$ intersect in an isolated point. One segment is colored red, the other blue. b) A sphere around the intersection point. Red curves indicate the intersections between fibers of one open book with the sphere. Blue curves are the intersections between the fibers of the other open book and the sphere. c) Deforming the red segment of the binding to a curve that is transverse to the blue fibers. d) Deforming the blue segment of the other binding to be transverse to the red fibers.}
        \label{fig:nbhd of p_i}
    \end{figure}   
\end{proof}

\section{Mutual arc presentations of canonically fibered links}\label{sec:canonical}
There exist fibered links whose fiber surface, or equivalently, minimal genus Seifert surface, cannot be obtained using Seifert's algorithm for any diagram of the link. We say that a fibered link $L\subset S^3$ is \emph{canonically fibered} if the fiber surface of $L$ can be obtained via Seifert's algorithm for some link diagram of $L$. For such links, we show that they are not counterexamples to Conjecture~\ref{conj:mutual_arc} by providing an algorithm that constructs mutual arc presentations for them.

\begin{lemma}\label{lemma:mutual page presentation}
    Given any canonical Seifert surface $\mathcal{S}$ of a non-split link $L\subset S^3$, there exists a mutual arc presentation of $L$ and the unknot relative to the disk and $\mathcal{S}$. 
\end{lemma}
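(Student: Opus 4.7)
The plan is to give an explicit construction of the unknot $O$ together with the mutual arc presentation, starting from a link diagram that realizes $\mathcal{S}$ via Seifert's algorithm.

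\emph{Setup.} First, fix a diagram $D$ of $L$ such that Seifert's algorithm applied to $D$ yields the surface $\mathcal{S}$. Let $D_1,\ldots,D_n$ be the Seifert disks, bounded by the Seifert circles in the diagram plane, and let $b_1,\ldots,b_k$ be the twisted bands at the crossings. Since $L$ is non-split, the diagram $D$ can be taken connected, so the Seifert graph $G(\mathcal{S})$ (with vertices the $D_i$ and edges the $b_j$) is connected. Normalize $\mathcal{S}$ to sit in a small neighborhood of the diagram plane, with each $D_i$ flat and each $b_j$ a short half-twisted band.

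\emph{Constructing $O$.} I would build $O$ as a concatenation of \emph{surface arcs} lying on $\mathcal{S}$ and \emph{complement arcs} running through $S^3 \setminus \mathcal{S}$. The surface arcs are chosen by lifting a walk on $G(\mathcal{S})$ to arcs on $\mathcal{S}$ that traverse the Seifert disks as chords and cross the twisted bands in the expected way; the complement arcs are short arcs in $S^3 \setminus \mathcal{S}$ connecting the endpoints of consecutive surface arcs after each passes transversally through $L$ once. Each crossing of $O$ with $L$ is placed near the attaching region of a band on $L$.

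\emph{Verifications.} Three things need to be checked. First, $O$ is an unknot: for a walk chosen along a spanning tree of $G(\mathcal{S})$ (with a suitable doubling of edges so the walk closes up), $O$ should bound an explicit disk built from tree-chords on $\mathcal{S}$ capped off by half-disks in the complement. Second, each arc of $O$ between consecutive intersections with $L$ lies on a fiber of the lens thickening of $\mathcal{S}$; this should be immediate from the construction, since the surface arcs sit on $\mathcal{S}$ itself and the complement arcs (each short near a band) can be isotoped onto parallel push-offs of $\mathcal{S}$. Third, each arc of $L$ between consecutive intersections with $O$ must lie on a page of the unbook with binding $O$.

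\emph{Main obstacle.} The hard part will be the third verification: controlling the position of the segments of $L$ relative to the unbook built from $O$. The plan is a local-to-global argument. Working crossing-by-crossing in $D$, I would first arrange that near each band the two local arcs of $L$ land on distinct half-disks bounded by $O$, and then use the connectedness of $G(\mathcal{S})$ to splice these local arc presentations into a global one. If some segment of $L$ is too long to fit on a single page, one inserts an additional crossing of $O$ with $L$ (for instance near every crossing of $D$) and subdivides, iterating until every complementary segment sits on a half-disk. The non-splitness hypothesis is what makes this refinement coherent across the whole diagram, and pinning down this refinement is the algorithmic heart of the proof.
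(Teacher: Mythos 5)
Your proposal inverts the order of difficulties relative to the paper's proof, and the inversion is what creates the gap. You want to first construct $O$ \emph{on} $\mathcal{S}$ (lifting a walk on the Seifert graph, with complement arcs near the bands), so that the condition ``$O$ is a generalized arc presentation relative to $\mathcal{S}$'' is essentially built in; you then acknowledge that the hard part is getting every arc of $L$ onto a page of the unbook with binding $O$. The paper does the opposite: it first builds a \emph{planar} unknot $U$ as a smoothing of a 4-valent graph that encircles every crossing and every diagram arc of $L$, so that $L$ is cut into short, local pieces and is automatically an arc presentation relative to $U$; the work is then to push $U$'s edges (classified as short, long, and parallel) onto fibers of the lens thickening of $\mathcal{S}$, which is a local, band-by-band isotopy.

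The genuine gap in your plan is precisely the third verification. If $O$ is built from a spanning-tree walk, it intersects $L$ only near the (few) bands the walk traverses, so the complementary arcs of $L$ are long and can wind through several Seifert disks and bands. Your proposed fix---``insert an additional crossing of $O$ with $L$ near every crossing of $D$ and subdivide, iterating''---is not a construction but a wish: each insertion changes $O$, and you must simultaneously preserve (i) that $O$ is still unknotted, and (ii) that every new piece of $O$ still lies on a fiber of the lens thickening of $\mathcal{S}$. You give no mechanism for doing both at once, and there is no reason the naive insertion would terminate in a coherent global presentation. The paper sidesteps this entirely by choosing the unknot first (in the plane, where unknottedness is free via the Eulerian-cycle argument) and with so many intersections with $L$ that each arc of $L$ is trivially small; the only thing left to check is that the unknot's own arcs can be slid onto the surface, which succeeds because of the explicit short/long/parallel classification and the fact that at each vertex of the 4-valent graph exactly one short, one long, and two parallel edges meet, so the rotations around $L$ can be performed without collisions. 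Your non-splitness remark is used in the same place in both approaches (to guarantee connectedness of the relevant graph), so that part is fine; the missing piece is a concrete, collision-free subdivision scheme, and without it the proposal does not close.
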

\begin{proof}
Let $\mathcal{D}_L$ be a diagram of $L$ such that the given canonical Seifert surface $\mathcal{S}$ is obtained via Seifert's algorithm using $\mathcal{D}_L$.

Given a $4$-valent, planar graph, by \emph{smoothing} a vertex we mean the process of removing a vertex and reconnecting adjacent edges in one of the two ways shown in Figure~\ref{fig:smoothings}.
\begin{figure}[ht]
    \centering
    \includegraphics[width=0.4\linewidth]{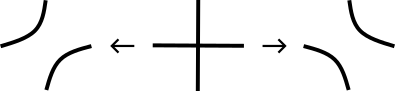}
    \caption{Two ways of smoothing a vertex of a $4$-valent planar graph.}
    \label{fig:smoothings}
\end{figure}

We construct a planar unknot $U$, to be the binding of an unbook of $S^3$, by choosing a smoothing\footnote{One way to see that for any connected, planar, $4$-valent graph there exists a choice of smoothings at each vertex that converts the graph into a planar unknot is to use a strengthened form of Euler’s theorem on bridge problems. The strengthened form guarantees the existence of an Eulerian cycle -- a closed path that visits every edge exactly once -- in which, at every vertex, the path turns either left or right (but never goes straight)~\cite{eulerian_path}. In this setting, turning left or right at a vertex corresponds precisely to choosing a smoothing.} of the following connected, planar $4$-valent graph. 

\begin{enumerate}[(i)]
    \item
    Draw $4$ edges that form a closed loop around each crossing of $\mathcal{D}_L$. Such an edge is called \emph{long} (resp.\ \emph{short}) if its two endpoints lie on two distinct (resp.\ the same) Seifert circles of $\mathcal{S}$ obtained by applying Seifert's algorithm to $\mathcal{D}_L$.
    \item
    Add $2$ edges that form a closed loop enclosing each part of an arc of $\mathcal{D}_L$ outside the closed loops drawn in the previous step, such that, together with the edges drawn in the previous step, a connected, planar $4$-valent graph is formed. We say these edges are \emph{parallel} ones.
\end{enumerate}

By an arc of $L$, we mean a (connected) segment between two intersection points of $U$ and $L$. Observe that there are no crossings between interiors of arcs of $L$ and the unknot. Furthermore, two arcs of $L$ share at most one crossing, therefore we can assume each arc of $L$ lies on a different page of the unbook with binding $U$, i.e.\ it is an arc presentation of $L$ with binding of the unbook given by $U$.

Claim: the edges of the unknot $U$ can be isotoped to lie on fibers of a lens thickening of $\mathcal{S}$, i.e.\ such an arc presentation of $L$ relative to the unbook with binding $U$ can be isotoped to a mutual arc presentation of $L$ and $U$.

First isotope short edges and parallel edges to each lie on a separate fiber of a lens thickening of $\mathcal{S}$. If a short edge is already lying on $\mathcal{S}$ it is necessarily disjoint from parallel edges that are also lying on $\mathcal{S}$ by our construction. If a short edge is not already lying on $\mathcal{S}$, then rotate it around $L$ to lie on a fiber of the lens thickening of $\mathcal{S}$ as in the last part of Figure~\ref{fig:algorithm}. At every vertex of the 4-valent graph discussed earlier there are one short edge, one long edge and two parallel edges that meet at the vertex. Therefore, the short edges that are not already on $\mathcal{S}$ can be rotated around $L$ to lie on a fiber of the lens thickening of $\mathcal{S}$ without passing one edge through another. Similarly, after this, we can rotate the parallel edges that do not already lie on $\mathcal{S}$ to lie on a fiber of the lens thickening of $\mathcal{S}$ without creating any intersections. 

\begin{figure}[ht]
    \centering
    \def\svgwidth{0.95\textwidth}
\begingroup%
  \makeatletter%
  \providecommand\color[2][]{%
    \errmessage{(Inkscape) Color is used for the text in Inkscape, but the package 'color.sty' is not loaded}%
    \renewcommand\color[2][]{}%
  }%
  \providecommand\transparent[1]{%
    \errmessage{(Inkscape) Transparency is used (non-zero) for the text in Inkscape, but the package 'transparent.sty' is not loaded}%
    \renewcommand\transparent[1]{}%
  }%
  \providecommand\rotatebox[2]{#2}%
  \newcommand*\fsize{\dimexpr\f@size pt\relax}%
  \newcommand*\lineheight[1]{\fontsize{\fsize}{#1\fsize}\selectfont}%
  \ifx\svgwidth\undefined%
    \setlength{\unitlength}{1254.67517763bp}%
    \ifx\svgscale\undefined%
      \relax%
    \else%
      \setlength{\unitlength}{\unitlength * \real{\svgscale}}%
    \fi%
  \else%
    \setlength{\unitlength}{\svgwidth}%
  \fi%
  \global\let\svgwidth\undefined%
  \global\let\svgscale\undefined%
  \makeatother%
  \begin{picture}(1,0.51200632)%
    \lineheight{1}%
    \setlength\tabcolsep{0pt}%
    \put(0,0){\includegraphics[width=\unitlength,page=1]{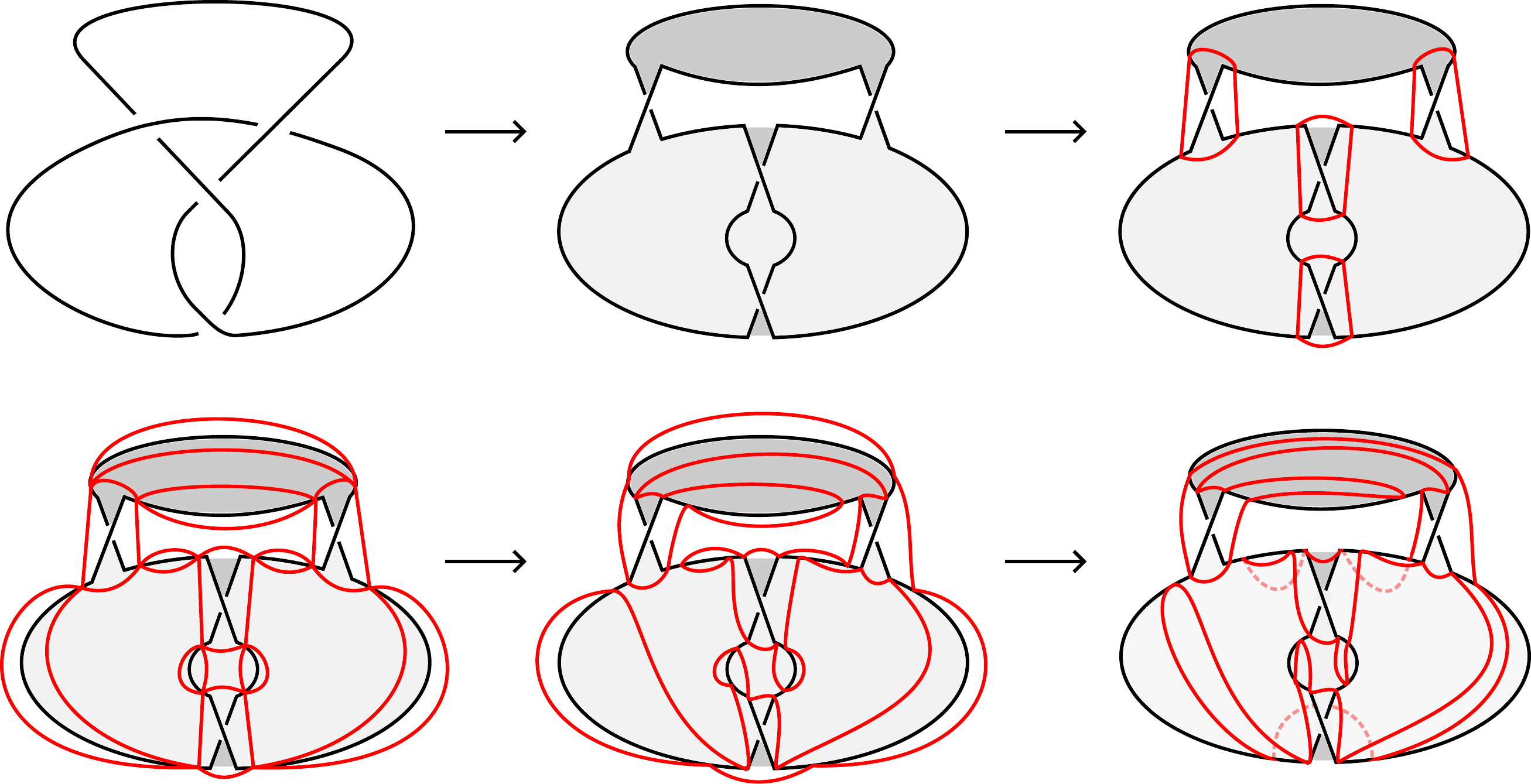}}%
    \put(0.26444091,0.48176041){\makebox(0,0)[lt]{\lineheight{1.25}\smash{\begin{tabular}[t]{l}$\text{Seifert's}$\end{tabular}}}}%
    \put(0.26444091,0.45067665){\makebox(0,0)[lt]{\lineheight{1.25}\smash{\begin{tabular}[t]{l}$\text{ algorithm}$\end{tabular}}}}%
  \end{picture}%
\endgroup%

    \caption{Constructing the binding of an unbook and deforming the short and parallel edges.}
    \label{fig:algorithm}
\end{figure}

Finally, as depicted in Figure~\ref{fig:long edge}, we deform each long edge towards the interior of the half-twisted band in $\mathcal{S}$ to lie on a fiber of a lens thickening of $\mathcal{S}$.
\begin{figure}
    \centering
    \includegraphics[width=0.5\linewidth]{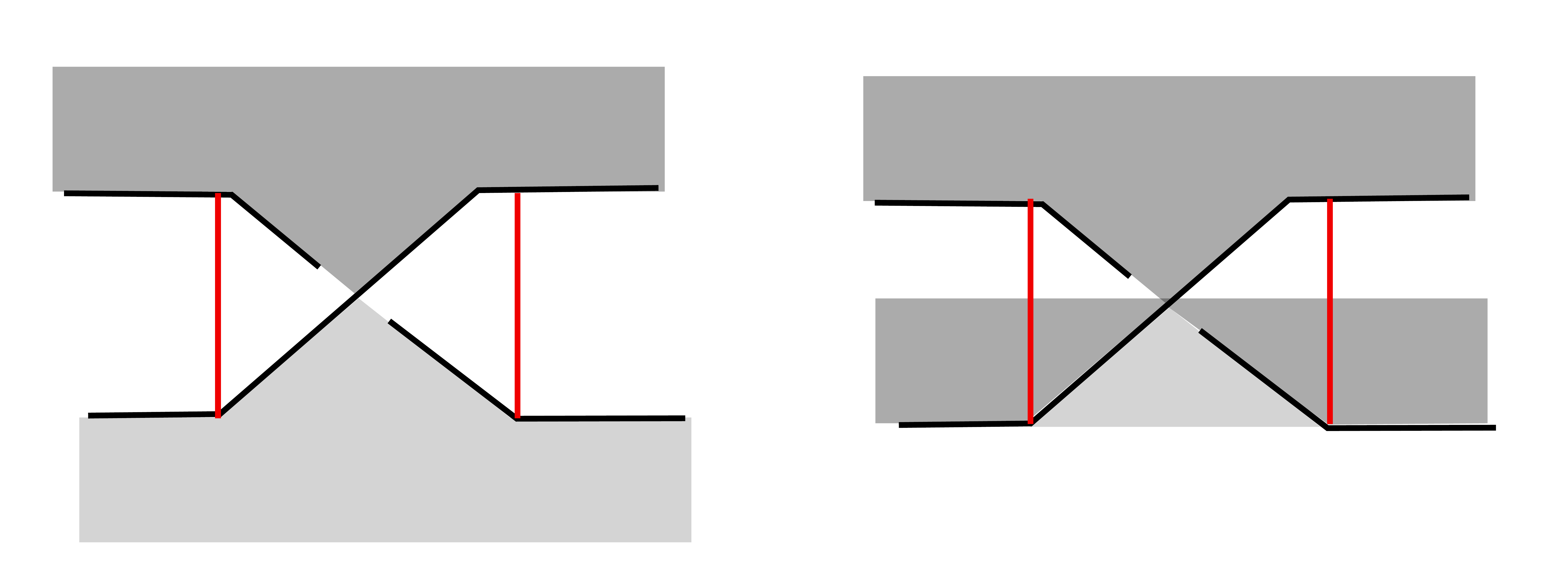}
    \includegraphics[width=0.5\linewidth]{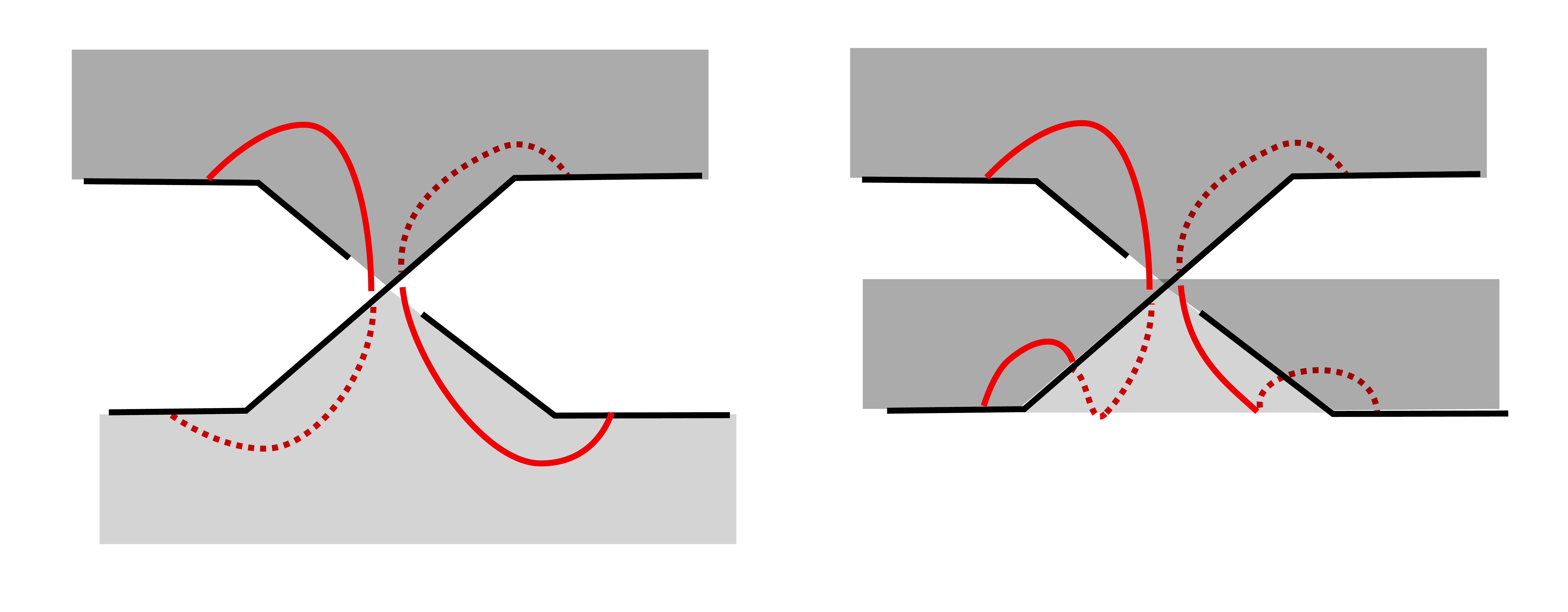}
    \caption{Deformation of long edges. The first row of figures shows the initial arcs. In the second row the arcs are deformed to lie on the surface.}
    \label{fig:long edge}
\end{figure}
\end{proof}

\begin{remark}
    By construction, the arc presentation of $L$ obtained in the proof of Lemma~\ref{lemma:mutual page presentation} consists of $8c$ arcs, where $c$ is the number of crossings of $\mathcal{D}_L$. This is not expected to be the minimal number of arcs among all arc presentations of a given link. For example, the minimal such number, called \emph{arc index}, for an alternating knot is equal to its minimal crossing number plus two~\cite{arc_index_of_alternating_knots, bounding_arc_index}.
\end{remark}

\begin{remark}
In our definition of a generalized arc presentation of a link (Definition~\ref{def:mutual arc}) we do not require different arcs to lie on different pages, which obviously does not affect the concept. We can thus define the \emph{page index} of a link $L$ relative to a Seifert surface $\mathcal{S}_2$ of some other link $L_2$ as the minimal number of fibers of a lens thickening of $\mathcal{S}_2$ that contain arcs of a generalized arc presentation of $L$ relative to $\mathcal{S}_2$. Likewise, the \emph{mutual page index} of a pair of Seifert surfaces $\mathcal{S}_1$ and $\mathcal{S}_2$ with boundaries $L_1$ and $L_2$, respectively, is the minimum of the minimal number of fibers of a lens thickening of $\mathcal{S}_2$ that contain arcs of $L_1$ and the minimal number of fibers of a lens thickening of $\mathcal{S}_1$ that contain arcs of $L_2$ in a mutual arc presentation of $L_1$ and $L_2$ relative to $\mathcal{S}_2$ and $\mathcal{S}_1$

The construction in the proof of Lemma~\ref{lemma:mutual page presentation} shows that the page index of any link $L$ relative to a disk is at most 5. Likewise the mutual page index of any canonical Seifert surface and the disk is at most 5. This can be seen as follows. There are short edges and parallel edges of the unknot that lie already on $\mathcal{S}=\mathcal{S}_0$ from the start. When we rotate the remaining short edges and parallel edges around $L$ until they lie on a fiber of the lens thickening of $\mathcal{S}$, we choose the rotation so that the deformed edge lies below the Seifert disk whose boundary contains its endpoints. There are no intersections between different arcs, so that each of these deformed arcs can be taken to lie on $\mathcal{S}_{\varepsilon}$ or $\mathcal{S}_{-\varepsilon}$ for some $\varepsilon$. Finally, all deformed long edges can be taken to lie on $\mathcal{S}_{2\varepsilon}$ or $\mathcal{S}_{-2\varepsilon}$.
\end{remark}

\begin{proof}[Proof of Theorem~\ref{thm:gc}]
Since $L$ is canonically fibered, there exists a diagram of $L$ such that the result of applying Seifert's algorithm to the diagram is a page $\mathcal{S}_0$ of the open book $\operatorname{OB}'$ with binding $L$. By Lemma~\ref{lemma:mutual page presentation}, there exists a mutual arc presentation of $L$ and the unknot relative to the disk and $\mathcal{S}_0$. Since $\mathcal{S}_0$ is a page of $\operatorname{OB}'$ all fibers of its lens thickening are also pages of $\operatorname{OB}'$ and we have a mutual arc presentation of $\operatorname{OB}'$ and the unbook.
\end{proof}

Proposition~\ref{prop:mutual book presentation to mutually braided} implies that there are isotopies of $L$ and of the unknot that turn them into mutually braided links from a mutual arc presentation.
\begin{corollary}\label{cor:gc}
    Let $L$ be a canonically fibered link in $S^3$, then $L$ and the unknot can be mutually braided. \qed
\end{corollary}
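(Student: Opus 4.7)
The plan is essentially to chain together two results already established in the paper. First, I would invoke Theorem~\ref{thm:gc}: since $L$ is canonically fibered, it admits a mutual arc presentation with the unknot relative to the disk and a canonical Seifert surface $\mathcal{S}_0$ of $L$. Because $L$ is fibered, $\mathcal{S}_0$ is a page of the open book $\operatorname{OB}'$ with binding $L$, so this mutual arc presentation is in fact a mutual arc presentation of the two open books $\operatorname{OB}'$ and the unbook.

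Second, I would apply Proposition~\ref{prop: detailed mutual book presentation to mutually braided} (the stronger, two-part form of Proposition~\ref{prop:mutual book presentation to mutually braided}). Its ``moreover'' clause says precisely that whenever two fibered links $L$ and $B$ admit a mutual arc presentation between their open books, one can perform two successive (not simultaneous) isotopies — first isotope $L$ to be braided with respect to the unbook, then isotope the unknot $B$ to be braided with respect to $\operatorname{OB}'$ — so that the resulting pair is mutually braided in the sense of Definition~\ref{def:braided}(ii). Feeding our mutual arc presentation into that proposition immediately produces mutually braided representatives of $L$ and the unknot.

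The main (and only) thing to check is that the ``moreover'' clause actually applies to our setup, i.e.\ that both components are fibered with their surfaces serving as pages. The unknot is fibered with disk pages, and by assumption $L$ is fibered with $\mathcal{S}_0$ a page of $\operatorname{OB}'$, so the hypotheses are met verbatim. No further obstacle arises — the corollary is a one-line consequence once Theorem~\ref{thm:gc} and Proposition~\ref{prop: detailed mutual book presentation to mutually braided} are in hand, which is why the authors write it with the single symbol \qed.
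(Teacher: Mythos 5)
Your proposal is correct and matches the paper's reasoning exactly: the corollary is stated as an immediate consequence of Theorem~\ref{thm:gc} together with Proposition~\ref{prop: detailed mutual book presentation to mutually braided}, as signalled by the sentence preceding it and the bare \qed. Nothing further to add.
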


\begin{proof}[Proof of Corollary~\ref{cor:alternating}]
    Since a minimal genus Seifert surface is always obtained via Seifert's algorithm when applied to an alternating diagram~\cite{alternating1, alternating2}, alternating fibered links are all canonically fibered.

    The second part of the corollary follows directly from~\cite{Stoimenow}. Stoimenow provides infinitely many almost alternating fibered knots that are canonically fibered, yet the minimal genus Seifert surface is not obtained by applying Seifert's algorithm to an almost alternating diagram.
\end{proof}

\begin{proof}[Proof of Corollary~\ref{cor:deplumbing}]
    In his Ph.D. thesis~\cite{Misev_2016}, Misev demonstrates the existence of an infinite family of Alexander polynomials which, when realized by a two component, genus one link, cannot arise from Hopf plumbings. The explicit realization of these polynomials by hyperbolic, fibered links of two components and genus one is due to Stoimenow~\cite{Stoimenow_infinite_family}. The upper bound on the canonical genus for each link in this family, as directly obtained from the diagrams in Figure~3.1 in~\cite{Misev_2016}, is one; therefore, the infinite family of links are all canonically fibered.
\end{proof}


By defining mutual arc presentations in terms of lens thickenings of Seifert surfaces instead of the more general open books, we have obtained results on links that are not necessarily fibered. In Definition~\ref{def:braided}, we have adopted the more restrictive viewpoint, defining mutual braiding of two bindings of open books. It is also possible to extend this definition to non-fibered links. Let $L_1$, $L_2$ be two disjoint, oriented links in $M$ and let $f_1\colon M\backslash L_1\to S^1$, $f_2\colon M\backslash L_2\to S^1$ be circle-valued Morse maps that have the same behavior as an open book on a tubular neighborhood of $L_1$ and $L_2$, respectively. Let $\mathcal{S}_1$ and $\mathcal{S}_2$ be Seifert surfaces of $L_1$ and $L_2$, respectively, that are regular level sets of $f_1$ and $f_2$, respectively. Then nearby level sets of $f_i$ are isotopic to $\mathcal{S}_i$, $i=1,2$, and in particular the nearby level sets are exactly the fibers of a lens thickening of $\mathcal{S}_i$. We may then define $L_1$ and $L_2$ to be \emph{mutually braided with respect to $f_1$ and $f_2$} if $L_1$ is positively transverse to the level sets of $f_2$ and $L_2$ is positively transverse to the level sets of $f_1$. In the special case where $L_1$ and $L_2$ are fibered links and $f_1$ and $f_2$ are the corresponding fibration maps, this reduces to our previous definition of mutual braiding between two fibered links, or, equivalently, a mutual braiding of the two open books.

The proof of Proposition~\ref{prop: detailed mutual book presentation to mutually braided} is unaffected by this extension of the definition, since the deformation of each of the two links occurs exclusively in a tubular neighborhood of the other. We thus obtain the following result.
\begin{proposition}
Let $L_1$, $L_2$, $\mathcal{S}_1$, $\mathcal{S}_2$, $f_1$ and $f_2$ be as above. If there is a mutual arc presentation of $L_1$ and $L_2$ relative to $\mathcal{S}_2$ and $\mathcal{S}_1$, then $L_1$ and $L_2$ can be deformed to be mutually braided with respect to $f_1$ and $f_2$. \qed
\end{proposition}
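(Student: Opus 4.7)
The plan is to adapt the proof of Proposition~\ref{prop: detailed mutual book presentation to mutually braided} essentially verbatim, replacing each occurrence of ``page of $\operatorname{OB}$'' by ``level set of $f_2$'' (and symmetrically when deforming $L_2$). The crucial observation is that the only local geometric input used in the original proof is the standard behavior of an open book near its binding -- namely, that a sufficiently small sphere around a point of the binding meets the pages in lines of longitude -- and this is precisely what the hypothesis on $f_1$ and $f_2$ guarantees inside tubular neighborhoods of $L_1$ and $L_2$. Together with the fact recalled just before the statement that nearby level sets of $f_i$ are exactly the fibers of a lens thickening of $\mathcal{S}_i$, this lets the original argument be recycled without change.

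Concretely, I would first enumerate the intersection points $\{p_1,\dots,p_n\} = L_1 \cap L_2$ and, shrinking $r>0$ if necessary, pick short segments $L_1^{(i)} \subset L_1$ and $L_2^{(i)} \subset L_2$ of length $2r$ meeting at their midpoints at $p_i$ and sitting entirely inside the tubular collars on which $f_1$ and $f_2$ realize the standard open-book model. I would then form the auxiliary disks $D_i$ (containing $L_1^{(i)}$, orthogonal to $L_2^{(i)}$) and $D_i'$ (containing $L_2^{(i)}$, orthogonal to $L_1^{(i)}$) exactly as before, isotope each $L_1^{(i)}$ along $D_i$ to $\partial D_i$ while keeping $L_1 \setminus \bigcup_i L_1^{(i)}$ fixed, and perform the small perturbation of the arcs as in Figure~\ref{fig:small perturbation}. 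The mutual arc presentation hypothesis asserts that each arc of $L_1 \setminus \bigcup_i L_1^{(i)}$ lies on a fiber of the lens thickening of $\mathcal{S}_2$, hence on a level set of $f_2$, while the local open-book model forces $\partial D_i$ to be positively transverse to nearby level sets of $f_2$. After perturbation, $L_1$ is positively transverse to the level sets of $f_2$.

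Next I would run the same construction with the roles of $L_1$ and $L_2$ interchanged, this time deforming $L_2$ inside a tubular neighborhood of the (already-deformed) $L_1$ so that it becomes positively transverse to the level sets of $f_1$. As in the remark following Proposition~\ref{prop: detailed mutual book presentation to mutually braided}, these are two separate isotopies rather than a single isotopy of $L_1 \cup L_2$; since the second is supported in a tubular neighborhood of $L_1$, disjoint from the region where the level sets of $f_2$ were used in the first step, it does not destroy the transversality of $L_1$ to the level sets of $f_2$ already achieved. The only step that really needs to be checked, and which is more of a sanity check than a genuine obstacle, is that the disks, rotations, and perturbations can all be confined to balls around the $p_i$ lying inside the two tubular collars -- which is automatic by choosing $r$ small enough. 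Once this localization is in place, every remaining step of the proof of Proposition~\ref{prop: detailed mutual book presentation to mutually braided} applies unchanged.
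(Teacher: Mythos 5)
Your proposal is correct and matches the paper's own (one-sentence) justification: the earlier proposition's proof carries over unchanged because the two deformations are supported in tubular neighborhoods of $L_2$ and $L_1$, precisely the regions where $f_2$ and $f_1$ are required to have the standard open-book behavior. One small imprecision: the second isotopy (of $L_2$ near the $p_i$) is \emph{not} disjoint from where the level sets of $f_2$ were relevant in the first step -- it takes place right around the intersection points $p_i$ -- so your "disjointness" justification for why the first transversality survives is not quite the right reason. The correct reason, as in the original proof, is that $L_1$ is held fixed and the induced deformation of $f_2$ (and hence of its level sets) along $D_i'$ can be chosen to preserve transversality with $L_1$; this is exactly the sentence "this isotopy of $B_i$ preserves $L_i$ being positively transverse to the pages of $\operatorname{OB}$" in the original argument. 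Since you invoke that argument verbatim anyway, this does not affect the validity of your proposal.
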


Likewise, Corollary~\ref{cor:gc} becomes:
\begin{proposition}
Let $L$ be an oriented link in $S^3$, and let $f\colon S^3\backslash L\to S^1$ be a circle-valued Morse map that has the same behavior as an open book in a tubular neighborhood of $L$. Suppose that there exists a regular level set of $f$ that is a canonical Seifert surface of $L$. Then $L$ and the unknot can be mutually braided with respect to $f$ and the fibration map of the unbook. \qed
\end{proposition}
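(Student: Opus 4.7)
The plan is to mimic the proof of Theorem~\ref{thm:gc}, but replacing the open book $\operatorname{OB}'$ by the circle-valued Morse map $f$ and replacing the appeal to Proposition~\ref{prop: detailed mutual book presentation to mutually braided} by the extended version stated immediately before this proposition. The hypothesis does exactly the work needed: it ensures a canonical Seifert surface $\mathcal{S}_0$ appears as a regular level set of $f$, so the only non-fibered feature of $f$ (the singular set) lies far from $\mathcal{S}_0$.

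First I would invoke Lemma~\ref{lemma:mutual page presentation} directly, applied to the link $L$ and its canonical Seifert surface $\mathcal{S}_0$. The lemma is stated purely in terms of Seifert surfaces and lens thickenings, with no fiberedness assumption anywhere in its hypothesis or construction, and so it yields a mutual arc presentation of $L$ and the unknot relative to the disk and $\mathcal{S}_0$. In particular, every arc of $L$ lies on a fiber of a lens thickening of the standard disk bounded by the unknot, and every arc of the unknot lies on a fiber of a lens thickening of $\mathcal{S}_0$.

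Next I would observe that because $\mathcal{S}_0$ is a regular level set of $f$, the nearby level sets of $f$ are isotopic to $\mathcal{S}_0$ and can be identified with the fibers of a lens thickening of $\mathcal{S}_0$; this is where the assumption that $f$ behaves like an open book in a tubular neighborhood of $L$ is used, guaranteeing that the level sets near $L$ degenerate into $\mathcal{S}_0$ along the binding in the correct way. Hence the arcs of the unknot in the mutual arc presentation sit on level sets of $f$, and dually the arcs of $L$ sit on pages of the unbook. This is precisely the input required by the extended proposition stated just before our target statement.

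Finally I would apply that extended proposition to the pair $(L, \text{unknot})$ with the maps $(f, f_{\text{unbook}})$: it produces a pair of separate isotopies of $L$ and of the unknot, performed in disjoint tubular neighborhoods of the other link, after which $L$ becomes positively transverse to the level sets of the unbook and the unknot becomes positively transverse to the level sets of $f$. The main (very mild) obstacle is to check that the local picture near each intersection point $p_i$ used in the proof of Proposition~\ref{prop: detailed mutual book presentation to mutually braided} still makes sense: but by assumption $f$ looks like an open book projection near $L$, so on a small sphere around $p_i$ the level sets of $f$ still intersect the sphere in lines of longitude from the viewpoint of $L$, and the argument of Figures~\ref{fig:nbhd of p_i1}--\ref{fig:isotopy2} goes through verbatim. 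This completes the proof. \qed
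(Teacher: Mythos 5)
Your proposal is correct and matches the paper's implicit proof: the paper leaves this proposition with an immediate \qed precisely because it is the non-fibered analogue of Corollary~\ref{cor:gc}, obtained by combining Lemma~\ref{lemma:mutual page presentation} (which, as you observe, nowhere uses fiberedness in its hypotheses or its construction) with the extended version of Proposition~\ref{prop: detailed mutual book presentation to mutually braided} stated just above. Your check that the local isotopy argument near intersection points goes through verbatim is also right, since the hypothesis that $f$ has open-book behavior near $L$ is exactly what that step requires.
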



\section{Operations on braided open books}\label{sec:operations}
In this section, we prove that a braided Stallings plumbing of two braided open books is again a braided open book. The proof makes use of Rampichini diagrams~\cite{generalized_exchangeable, generalized_exchangeable2}, which are a type of diagram that visually encodes the structure of a braided open book.

Secondly, given a knot in the solid torus, called the pattern, and a companion knot, the satellite is obtained by inserting the solid torus containing the pattern into a tubular neighborhood of the companion. Also recall that a link is the binding of a braided open book if and only if it is the closure of a P-fibered braid~\cite{bode:braided}. We show that if the pattern is the closure of a P-fibered braid in the solid torus and the companion is the binding of a braided open book, then the resulting satellite is again the binding of a braided open book.

\subsection{Background on Rampichini diagrams}
We summarize the necessary properties of Rampichini diagrams.
\begin{definition}
A Rampichini diagram is a link diagram of a link in a thickened torus (drawn as a square with opposite sides identified), where the arcs of the diagram are labeled by transpositions, such that the link diagram and the labels satisfy the following conditions:
\begin{itemize}
\item The link diagram has no horizontal and no vertical tangencies in the square. In particular, the number of intersection points between the link diagram and any horizontal line in the square is constant. We write $n$ for this number plus 1.
\item Every arc of the link diagram is labeled by a transposition in $S_n$, the permutation group of $n$ elements.
\item Let $\tau_1$, $\tau_2,\ldots,\tau_{n-1}$ be the labels of the arcs at the bottom edge of the square, read from left to right. Then $\prod_{j=1}^{n-1}\tau_j=(1\ 2\ 3\ \ldots n-1\ n)$.
\item The labels on the bottom edge of the square are the same as the labels on the top edge of the square.
\item The labels on the right edge of the square are the same as the labels on the left edge of the square, except that all transposition have been shifted by one. That is to say, if an arc of the link diagram that meets the right edge of the square has the label $(i\ j)$, then the corresponding label on the left edge of the square is $(i-1\ j-1)$ (where all numbers are interpreted modulo $n$).
\item At every crossing the label of the undercrossing strand changes via conjugation by the label of the overcrossing strand, see Figure~\ref{fig:crossing}. 
\end{itemize}
\end{definition}

\begin{figure}[ht]
\centering
\labellist
\pinlabel $(i\ j)$ at 20 0
\pinlabel $(k\ l)$ at 220 -30
\pinlabel $(i\ j)(k\ l)(i\ j)$ at -50 255
\pinlabel $(i\ j)$ at 220 220 
\pinlabel $(i\ j)$ at 400 10
\pinlabel $(k\ l)$ at 570 20
\pinlabel $(k\ l)$ at 380 255
\pinlabel $(k\ l)(i\ j)(k\ l)$ at 600 220 
\endlabellist
\includegraphics[height=2cm]{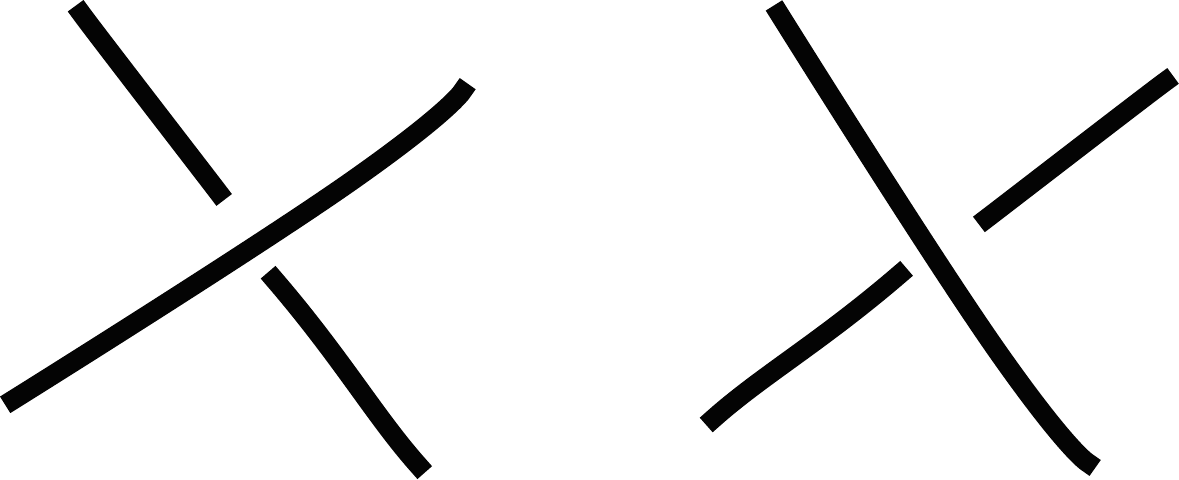}
\caption{The labels of the arcs in Rampichini diagram near crossings.\label{fig:crossing}}
\end{figure}

\begin{figure}[ht]
\centering
\labellist
\hair 2pt
\pinlabel $(3\ 4)$ at 400 90
\pinlabel $(1\ 3)$ at 540 90
\pinlabel $(2\ 3)$ at 750 90
\pinlabel $(1\ 2)$ at 120 250
\pinlabel $(2\ 3)$ at 930 260
\pinlabel $(3\ 4)$ at 120 430
\pinlabel $(1\ 4)$ at 930 430
\pinlabel $(3\ 4)$ at 930 560
\pinlabel $(2\ 3)$ at 120 550
\pinlabel $(1\ 4)$ at 600 730
\pinlabel $(3\ 4)$ at 400 850
\pinlabel $(1\ 3)$ at 540 850
\pinlabel $(2\ 3)$ at 750 850
\endlabellist
\includegraphics[height=5cm]{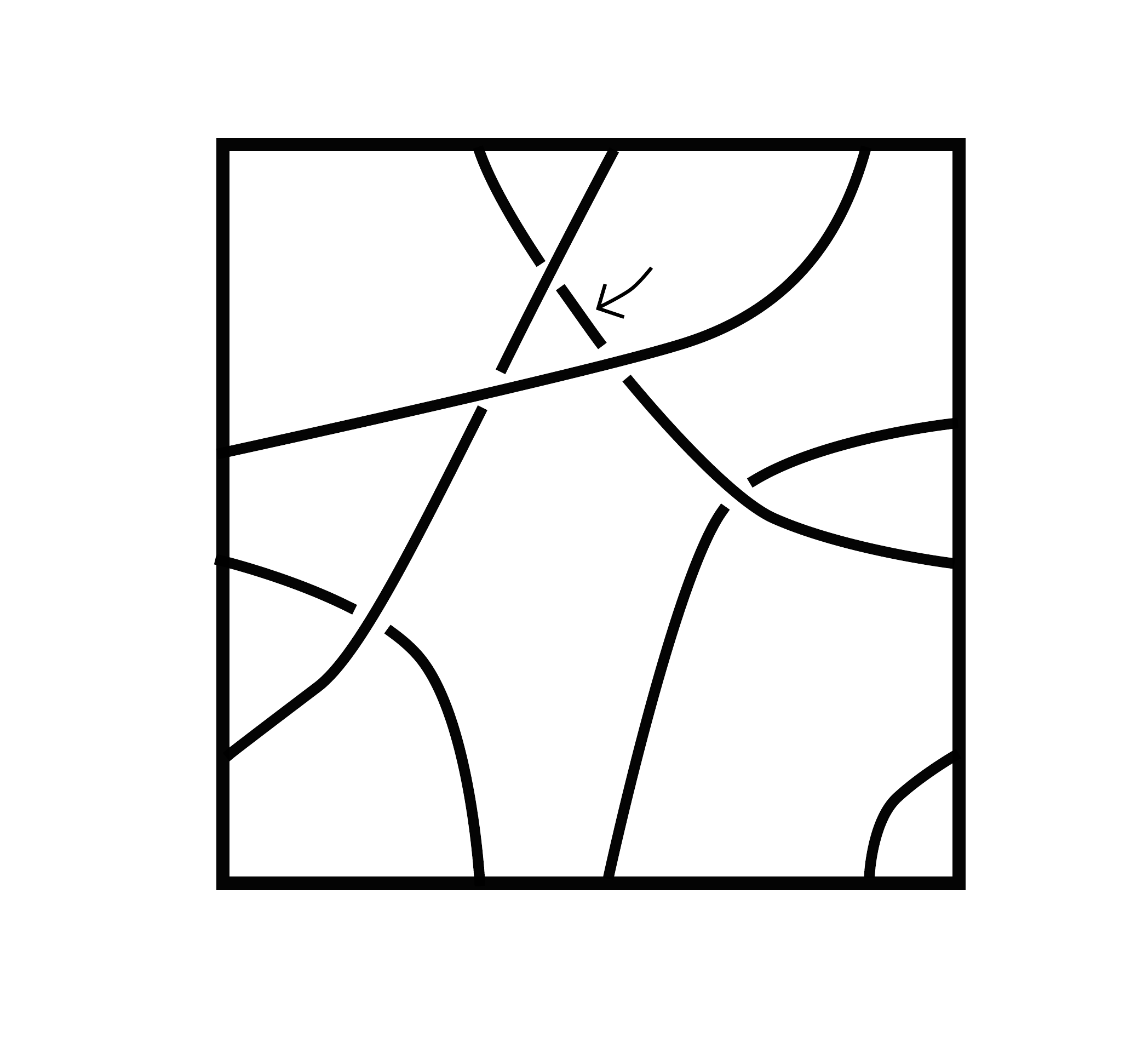}
\caption{An example of a Rampichini diagram.\label{fig:rampi}}
\end{figure}

There is a close relation between Rampichini diagrams and braided open books. We can associate to any Rampichini diagram a family of braids, represented by words in the BKL-generators $a_{i,j}$, as follows. Take any vertical line through the Rampichini diagram (away from crossings in the diagram). By the monotonicity condition of the curves in the diagram, the number of intersections between the vertical line and the link diagram does not depend on the choice of the vertical line. Each such intersection point has a transposition label, which is the transposition that labels the arc containing the intersection point. We read off these labels from the bottom to the top of the diagram and obtain a word in $S_n$, spelled by transpositions. We replace a transposition $(i,j)$ by the BKL-generator $a_{i,j}$ if in the corresponding intersection point the arc of the link diagram intersects the vertical line from left to right (going upwards) and by $a_{i,j}^{-1}$ if the arc intersects the vertical line from right to left (going upwards). In this way, we obtain a BKL-word for any vertical line (away from crossings).


For example, in Figure~\ref{fig:rampi} we obtain the words depending on which vertical line is chosen:
\[
\begin{array}{lllll}
a_{1,2}a_{3,4}^{-1}a_{2,3} & a_{3,4}^{-1}a_{1,2}a_{2,3} & a_{2,3}a_{1,3}a_{3,4}^{-1} & a_{2,3}a_{1,4}^{-1}a_{1,3} \\ a_{1,3}a_{2,3}a_{1,4}^{-1} & a_{1,3}a_{1,4}^{-1}a_{2,3} & a_{1,4}^{-1}a_{3,4}a_{2,3} & a_{2,3}a_{1,4}^{-1}a_{3,4}.
\end{array}
\]

It follows from the way in which transposition labels change at crossings in a Rampichini diagram that this family of braid words all represent the same braid isotopy class and the same (isotopy class) of braided surface. In fact, the braid words obtained in this way describe the braided fiber surfaces of an open book in $S^3$. It follows from~\cite{generalized_exchangeable} that an open book in $S^3$ is braided if and only if its binding is the closure of a braid that arises in this way from a Rampichini diagram. In fact, a braid is P-fibered if and only if it arises from a Rampichini diagram in this way~\cite{bode:braided}

Isotopies of the link diagram in the thickened torus that preserve all the properties of being a Rampichini diagram describe the same open books. 

Since it does not matter which vertical line we pick to read off the braid word from a Rampichini diagram, we will often consider the right edge (or the left edge) of the square. In other words, a braided fiber surface of a braided open book is obtained by interpreting the labels on the right edge (or the left edge) of the square (read from the bottom to the top) as BKL-generators and their inverses.

\begin{lemma}\label{lem:translation}
If a BKL-word $B$ arises from a Rampichini diagram (for some vertical line), then there is also another Rampichini diagram whose labels on the right edge (or left edge) give rise to the word $B$.
\end{lemma}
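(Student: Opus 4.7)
The plan is to view the Rampichini diagram as a fundamental domain for an infinite cyclic cover of a ``twisted torus'' and then change that fundamental domain by a horizontal translation so that the chosen vertical line becomes the new right edge.

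Concretely, I would lift the Rampichini diagram in the $x$-direction to a cylinder $C=\mathbb{R}\times[0,1]/{\sim}$ (with $(x,0)\sim(x,1)$). The left/right shift condition of a Rampichini diagram exactly says that the $S_n$-labeling $\tau(x,y)$ obtained on the lift is equivariant: $\tau(x-1,y)=\sigma\,\tau(x,y)\,\sigma^{-1}$, with $\sigma=(1\ 2\ \ldots\ n)$. Suppose the BKL-word $B$ is read off the vertical line $\ell=\{x=x_0\}\subset[0,1]^2$, where $x_0\in(0,1)$ is chosen generically so that $\ell$ meets no crossings or tangencies. Define the new candidate diagram $R'$ to be the one cut out by the fundamental domain $[x_0-1,x_0]\times[0,1]\subset C$, with labels inherited from the equivariant labeling. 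By construction the labels and the crossing signs along the right edge $\{x=x_0\}$ of $R'$ agree with those along $\ell$ in the original diagram, so the BKL-word read off the right edge of $R'$ is precisely $B$.

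It remains to verify that $R'$ really is a Rampichini diagram. Being free of horizontal and vertical tangencies, the local conjugation rule at crossings, and the agreement between top and bottom edge labels are all local and translation-invariant, so they survive the change of fundamental domain (using the genericity of $x_0$). The left/right shift condition is built into the $\sigma$-equivariance on the cylinder. The genuinely nontrivial check is that the product of the new bottom labels (read left to right) still equals $\sigma$. Writing $\tau_1,\dots,\tau_{n-1}$ for the old bottom labels (so $\tau_1\cdots\tau_{n-1}=\sigma$), and letting $k$ be the number of them lying to the left of $\ell$, the new bottom labels are
\[
\sigma\tau_{k+1}\sigma^{-1},\ \sigma\tau_{k+2}\sigma^{-1},\ \dots,\ \sigma\tau_{n-1}\sigma^{-1},\ \tau_1,\ \dots,\ \tau_k,
\]
whose product telescopes to $\sigma(\tau_{k+1}\cdots\tau_{n-1})\sigma^{-1}\cdot(\tau_1\cdots\tau_k)$. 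Substituting $\tau_{k+1}\cdots\tau_{n-1}=(\tau_1\cdots\tau_k)^{-1}\sigma$ from the old bottom relation collapses this to $\sigma$.

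The main technical obstacle is exactly this last algebraic step. The shift rule on the left/right edges and the bottom-edge product rule must interact compatibly, and this compatibility is really the content of the lemma: it is what makes horizontal re-cutting a legal operation on Rampichini diagrams. Once the telescoping above is in place, the rest of the argument is a routine check that a change of fundamental domain on the twisted torus preserves all the remaining Rampichini conditions, which gives the desired $R'$.
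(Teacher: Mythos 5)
Your proof is correct and takes essentially the same approach as the paper: translating the link diagram horizontally within the square and adjusting labels as arcs cross the (identified) left/right edges, so that the chosen vertical line becomes the new right edge. The cylinder-cover framing and the explicit telescoping check
\[
\sigma\bigl(\tau_{k+1}\cdots\tau_{n-1}\bigr)\sigma^{-1}\bigl(\tau_1\cdots\tau_k\bigr)=\sigma
\]
are a welcome clarification: the paper's proof simply states that the result of translation is again a Rampichini diagram and points to a figure, whereas you actually verify that the bottom-edge product condition is preserved. (One implicit assumption worth noting is that your equivariance formula $\tau(x-1,y)=\sigma\tau(x,y)\sigma^{-1}$ relies on the paper's left-to-right convention for composing permutations, which is also what makes $\tau_1\cdots\tau_{n-1}=(1\,2\,\ldots\,n)$ hold in Figure~\ref{fig:rampi}; with the standard function-composition convention the conjugation would go the other way, but then so would the bottom-edge condition, and the argument is unchanged.)
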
 
\begin{proof}
We may translate the link diagram in the square  in the horizontal direction to the right (or to the left). If we change the label as an arc crosses the right edge and re-enters through the left edge (or vice versa) according to the rule set out in the definition of a Rampichini diagram, then the result of this translation is again a Rampichini diagram. An example is provided in Figure~\ref{fig:translation}.
\begin{figure}
\centering
\vspace{1cm}
\labellist
\pinlabel $(2\ 3)$ at 830 1060
\pinlabel $(1\ 2)$ at 1180 1060
\pinlabel $(2\ 3)$ at 550 1260
\pinlabel $(1\ 3)$ at 1390 1260
\pinlabel $(1\ 2)$ at 1390 1350
\pinlabel $(1\ 3)$ at 550 1350
\pinlabel $(1\ 3)$ at 1390 1550
\pinlabel $(2\ 3)$ at 550 1550
\pinlabel $(1\ 2)$ at 1390 1650
\pinlabel $(1\ 3)$ at 550 1650
\pinlabel $(2\ 3)$ at 830 1840
\pinlabel $(1\ 2)$ at 1180 1840

\pinlabel $(1\ 3)$ at 210 -50
\pinlabel $(2\ 3)$ at 520 -50
\pinlabel $(1\ 2)$ at -80 130
\pinlabel $(2\ 3)$ at 760 130
\pinlabel $(1\ 2)$ at 760 260
\pinlabel $(1\ 3)$ at -80 260
\pinlabel $(2\ 3)$ at 760 400
\pinlabel $(1\ 2)$ at -80 400
\pinlabel $(1\ 2)$ at 760 570
\pinlabel $(1\ 3)$ at -80 570
\pinlabel $(1\ 3)$ at 220 730
\pinlabel $(2\ 3)$ at 530 730

\pinlabel $(1\ 2)$ at 1450 -50
\pinlabel $(1\ 3)$ at 1770 -50
\pinlabel $(2\ 3)$ at 1150 120
\pinlabel $(1\ 3)$ at 2000 120
\pinlabel $(2\ 3)$ at 2000 270
\pinlabel $(1\ 2)$ at 1150 270
\pinlabel $(1\ 3)$ at 2000 400
\pinlabel $(2\ 3)$ at 1150 400
\pinlabel $(2\ 3)$ at 2000 560
\pinlabel $(1\ 2)$ at 1150 560
\pinlabel $(1\ 2)$ at 1450 725
\pinlabel $(1\ 3)$ at 1770 725
\endlabellist
\includegraphics[height=8cm]{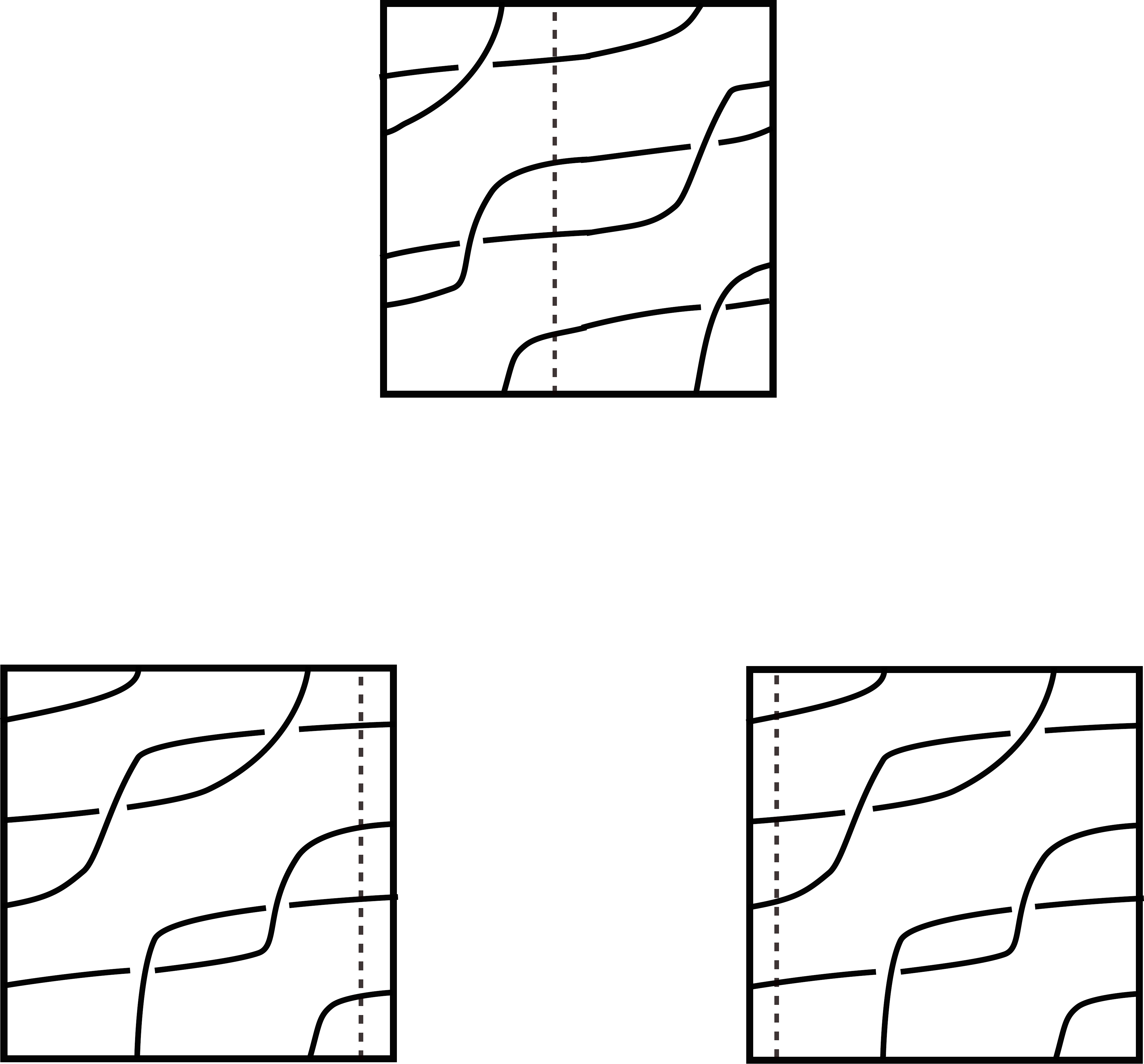}
\caption{Three Rampichini diagrams that are related by horizontal translations of the link diagram. Along the vertical line in each diagram the labels of the arcs spell (from the bottom to the top) a word that corresponds to the BKL-word $a_{2,3}a_{1,2}a_{2,3}a_{1,2}$. \label{fig:translation}}
\end{figure}
\end{proof}

Another property of Rampichini diagrams that follows from the way in which labels change at crossings is the following. Take any horizontal line $\ell$ through the Rampichini diagram that does not intersect any crossings or intersection points between the link and the left or right edge of the square. Then the labels $$\tau_1(\ell),\tau_2(\ell),\ldots,\tau_{n-1}(\ell)$$ of the arcs at the intersection points with $\ell$ (enumerated from left to right) satisfy 
\begin{equation}
\prod_{j=1}^{n-1}\tau_j(\ell)=(1\ 2\ 3\ \ldots\ n).
\end{equation}

\subsection{Braided Stallings plumbing}

A braided Stallings plumbing is a special type of Murasugi sum. In particular, a braided Stallings plumbing of two Seifert surfaces $F_1$ and $F_2$ is a fiber surface if and only if both $F_1$ and $F_2$ are fiber surfaces. The operation was used by Stallings to prove that homogeneous braid closures are fibered. The terminology was introduced by Rudolph~\cite{rudolph}, who defines the operation in terms of handle decompositions of the two surfaces and a gluing map.

Below we give a definition in terms of BKL words. Essentially, having two braided Seifert surfaces $F_1$ and $F_2$, we glue the right-most disk of $F_1$ to the left-most disk of $F_2$. Note that there are many different ways to do this. Figure~\ref{fig:stallings_plumbing} shows two different braided Stallings plumbings of the same two braided surfaces.

The resulting surface thus does not only depend on the summands, but also on the gluing map. All necessary information about this map is stored in the combinatorial information about the order of the bands that are attached to the disk along which the two surfaces are glued. In Figure~\ref{fig:stallings_plumbing1}, we glue two surfaces with 3 strands each. We look at the third disk of the resulting braided surface and see that (going from the bottom to the top) we encounter first a band from the left (i.e.\ originating from the first summand), then a band to the right (i.e.\ originating from the second summand), followed by one more from the left and one to the right. In comparison, along the third disk of the plumbed surface in Figure~\ref{fig:stallings_plumbing2} we first encounter two bands from the left, followed by two bands to the right.

\begin{figure}
    \begin{subfigure}{0.9\textwidth}
        \renewcommand\captionlabelfont{}
        \centering
        \includegraphics[height=5cm]{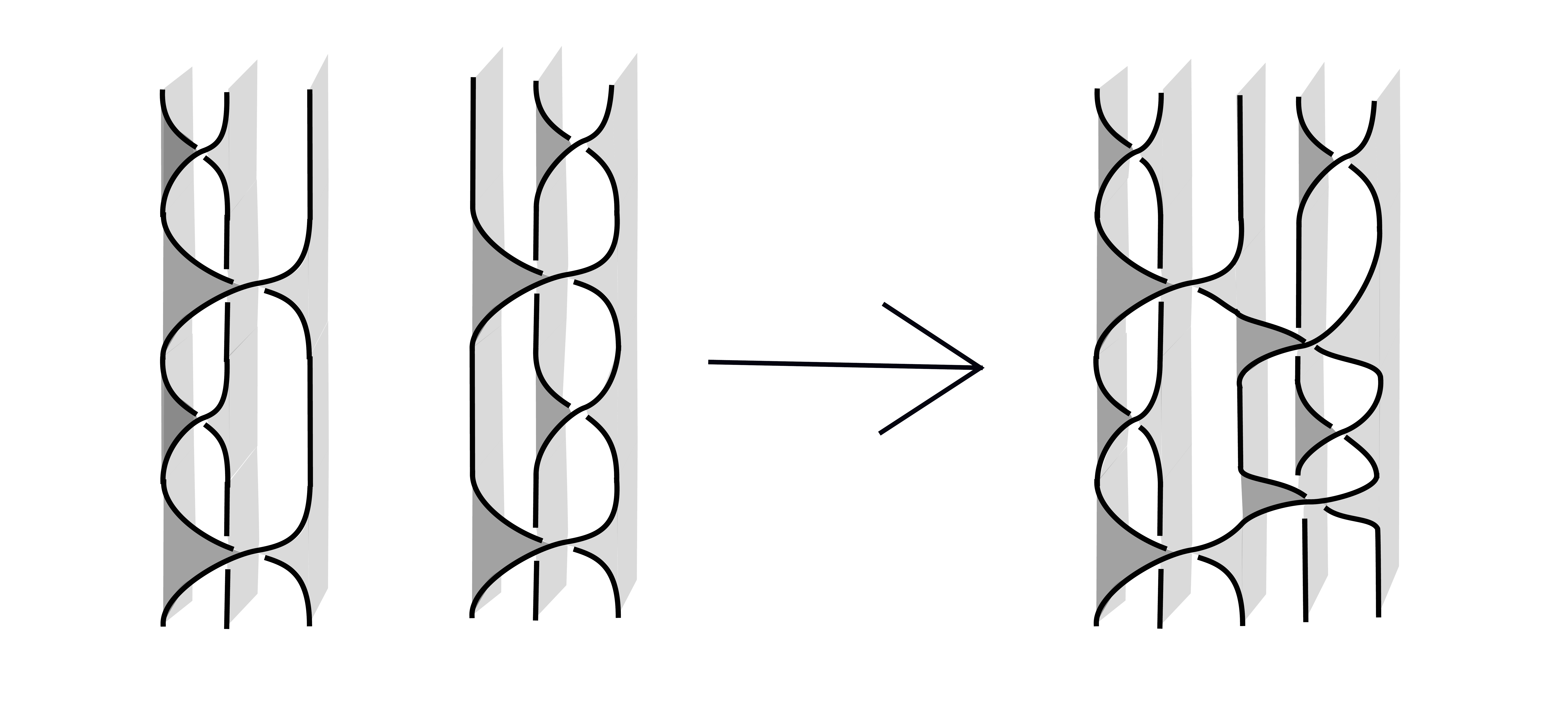}
        \caption{}
        \label{fig:stallings_plumbing1}
    \end{subfigure}
            \hfill
    \begin{subfigure}{0.9\textwidth}
        \renewcommand\captionlabelfont{}
        \centering
        \includegraphics[height=5cm]{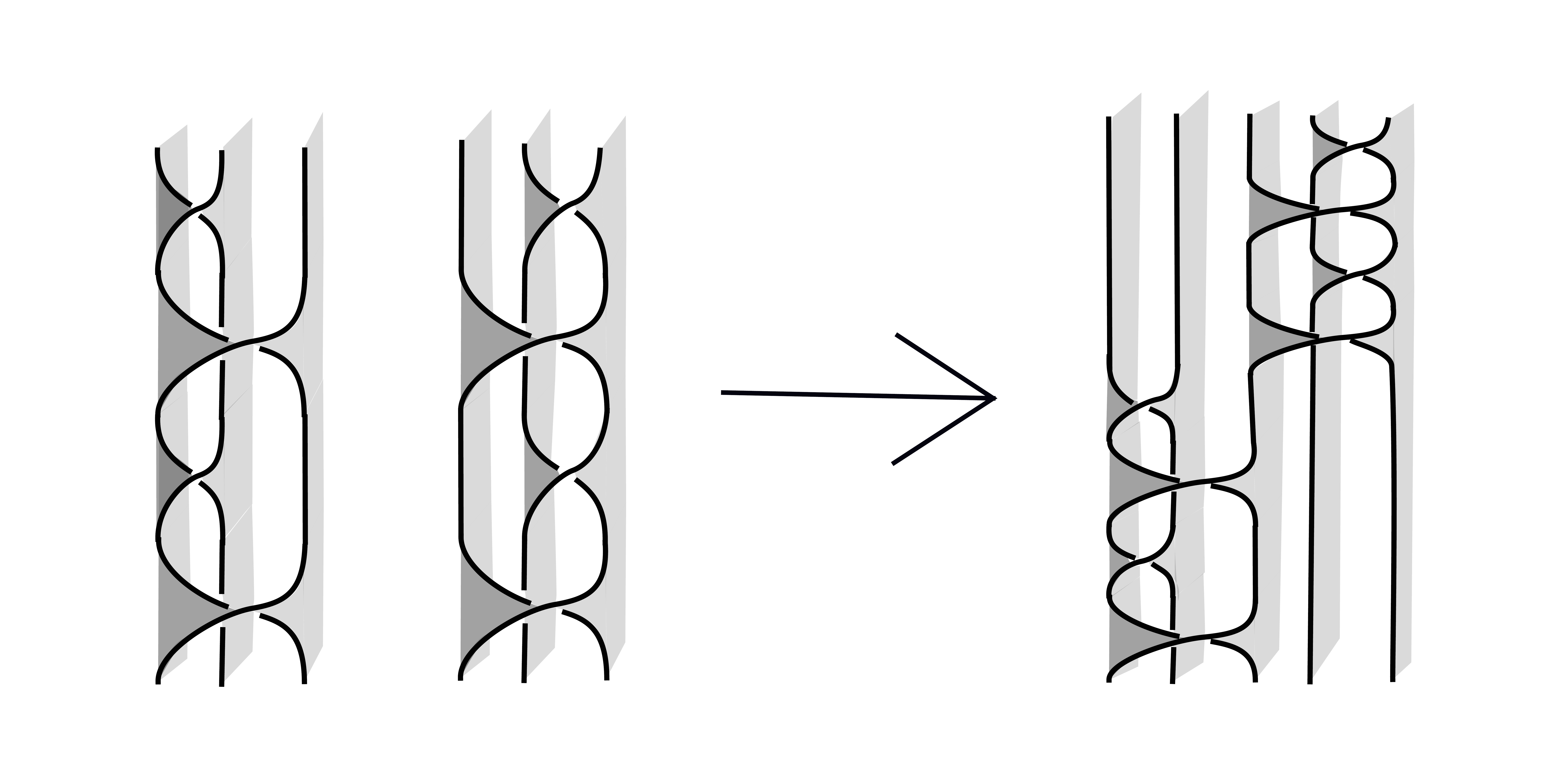}
        \caption{}
        \label{fig:stallings_plumbing2}
    \end{subfigure}
\caption{Two different braided Stallings plumbings of a pair of braided surfaces. (a) The braided surface $a_{1,3}a_{1,2}a_{1,3}a_{1,2}$ and $a_{1,3}a_{2,3}a_{1,3}a_{2,3}$ are plumbed to produce $a_{1,3}a_{3,5}a_{1,2}a_{4,5}a_{3,5}a_{1,3}a_{1,2}a_{4,5}$. (b) The same two surfaces are plumbed (with a different merger function) to produce $a_{1,3}a_{1,2}a_{1,3}a_{1,2}a_{3,5}a_{4,5}a_{3,5}a_{4,5}$.\label{fig:stallings_plumbing}}
\end{figure}

The following definition captures the different ways in which two ordered sets can be combined into one ordered set while maintaining the order in both subsets. In particular, it describes the way to order the bands on the disk along which the gluing happens.

\begin{definition}
Let $f_1\colon\{1,2,\ldots,\ell_1\}\to\{1,2,\ldots,\ell_1+\ell_2\}$, $f_2\colon\{\ell_1+1,\ell_1+2,\ldots,\ell_1+\ell_2\}\to\{1,2,\ldots,\ell_1+\ell_2\}$ be a pair of functions. The function $f\colon\{1,2,\ldots,\ell_1+\ell_2\}\to\{1,2,\ldots,\ell_1+\ell_2\}$, 
\begin{equation}
f(x)=\begin{cases}f_1(x) &\text{ if }x\in\{1,2,\ldots,\ell_1\}\\
f_2(x) &\text{  if }x\in\{\ell_1+1,\ell_1+2,\ldots,\ell_1+\ell_2\}\end{cases}
\end{equation}
is called a \textbf{merger} of size $(\ell_1,\ell_2)$ if both $f_1$ and $f_2$ are monotonically increasing and $f$ is a bijection.
\end{definition}

Now we state the definition of a braided Stallings plumbing in terms of BKL-words. It is equivalent to Rudolph's description.

\begin{definition}
Let $B_1$ and $B_2$ be two BKL words representing braided surfaces $F_1$ and $F_2$ with $n_1$ and $n_2$ strands and $\ell_1$ and $\ell_2$ bands, respectively. Let $i_{n_2,n_1+n_2}\colon\mathbb{B}_{n_2}\to\mathbb{B}_{n_1+n_2}$ be the group homomorphism that sends any BKL generator $a_{i,j}$ to $a_{n_1+i,n_1+j}$. Define two sequences $i_k$ and $j_k$ of length $\ell_1+\ell_2$ so that $B_1=\prod_{k=1}^{\ell_1}a_{i_{k},j_{k}}^{\varepsilon_{k}}$, $i_{n_2,n_1+n_2}(B_2)=\prod_{k=\ell_1+1}^{\ell_1+\ell_2}a_{i_{k},j_{k}}^{\varepsilon_{k}}$.  Let $f$ be a merger of size $(\ell_1,\ell_2)$. Then the braided surface $F_1*_fF_2$ represented by the word
\begin{equation}
B_1*_fB_2\defeq \prod_{k=1}^{\ell_1+\ell_2}a_{i_{f^{-1}(k)},j_{f^{-1}(k)}}^{\varepsilon_{f^{-1}(k)}}
\end{equation}
is the braided Stallings plumbing of $F_1$ and $F_2$ determined by the merger $f$.
\end{definition}

Note that there are many different mergers $f$ that produce the same surface from two given BKL-words. This is because a merger determines the order of all bands, but many of them commute: $a_{i,j}a_{k,l}=a_{k,l}a_{i,j}$ if $i<j<k<l$.

\begin{proof}[Proof of Theorem~\ref{thm:stallings_plumb}]
The braided surfaces $F_1$ and $F_2$ are represented by BKL-words $B_1$ and $B_2$ on $n_1$ and $n_2$ strands, respectively.
Since $F_1$ and $F_2$ are both braided pages of braided open books, there exist by~\cite{bode:braided} Rampichini diagrams $R_1$ and $R_2$ such that reading the labels (with the appropriate sign) in $R_i$ along some vertical line spells $B_i$. 

By Lemma~\ref{lem:translation} we can assume that these vertical lines are the left edge of the square in the case of $R_1$ and the right edge of the square in the case of $R_2$. 


Now, shift all labels of $R_2$ by $n_1-1$, i.e.\ a label $(i,j)$ turns into $(n_1-1+i,n_1-1+j)$. Then glue the right edge of the diagram $R_2$ (with the shifted labels) to the left edge of $R_1$, so that the intersections between the curves in $R_1$ with its left edge and the intersections between the curves in $R_2$ and its right edge are ordered according to the merger $f$. To be precise, we can enumerate the intersection points of the curves in $R_1$ with its left edge from the bottom to the top from $1$ through $\ell_1$. Similarly, we enumerate the intersection points of the curves in $R_2$ with its right edge from the bottom to the top from $\ell_1+1$ through $\ell_1+\ell_2$. Then by applying an isotopy to the curves in $R_1$ and $R_2$ (which maintains the property of being a Rampichini diagram) we can glue the two diagrams such that the first intersection point (from the bottom) between a curve and the glued edge is $f^{-1}(1)$, the second is $f^{-1}(2)$, and so on. 

In order to obtain a Rampichini diagram we have to extend the curves from the part of the diagram that used to be $R_1$ to (what used to be) $R_2$. Likewise, the curves from the old $R_2$ have to be extended to the old $R_1$. We do this by continuing the curves horizontally, going under all other curves that we cross until we reach the edge of the square.

We claim that the resulting diagram is a Rampichini diagram $R$. After a small isotopy the curves satisfy the necessary monotonicity condition. What is left to check is that the labels satisfy the properties of labels in a Rampichini diagram.

First, note that the labels $\tau_j$, $j=1,2,\ldots,n_1+n_2-2$ at the bottom edge of $R$ satisfy 
\begin{align}
\prod_{j=1}^{n_1+n_2-2}\tau_j&=\left(\prod_{j=1}^{n_2-1}\tau_j\right)\left(\prod_{j=n_2}^{n_1+n_2-2}\tau_j\right)\nonumber\\
&=(n_1\ n_1+1\ n_1+2\ \ldots\ n_1+n_2-2\ n_1+n_2-1)(1\ 2\ 3\ \ldots\ n_1-1\ n_1)\nonumber\\
&=(1\ 2\ 3\ \ldots\ n_1+n_2-1).
\end{align}
This is because the first $n_2-1$ labels are the labels on the bottom edge of $R_2$ shifted by $n_1-1$, and the labels $\tau_j$ with $j=n_2, n_2+1,\ldots,n_1+n_2-2$ are the labels on the bottom edge of $R_1$. Since both $R_1$ and $R_2$ are Rampichini diagrams, the product of these transpositions is as above.

Furthermore, the labels along the top edge of $R$ are the same as the labels along its bottom edge.

Figures~\ref{fig:stallings_plumb_rampi}  and \ref{fig:stallings_plumb_rampi2} show two examples of the gluing of two Rampichini diagrams using two different merger functions. As in the figures, we will refer to the components of the link diagram in $R$ that originate in this construction from $R_2$ as the black curves and refer to the components that originate from $R_1$ as the blue curves. Since the labels in $R$ are the corresponding labels in $R_2$ (shifted) and in $R_2$, the labels change as desired at crossings between curves of the same color. What is left to show is that if the labels change at crossings between curves of different colors as desired, then the labels on the right edge of $R$ are the same as the labels on the left edge, only shifted by 1.

Now suppose that $(i\ j)$ is the label of an arc on the right edge of $R_2$. Then the corresponding label on the vertical line where $R_1$ and $R_2$ (with shifted labels) have been glued together is $(i+n_1-1\ j+n_1-1)$. The black curve lies below all other curves in the right half of $R$, so that, following the rules of label changes at crossings in a Rampichini diagram, its label at the right end of $R$ is the result of conjugating $(i+n_1-1\ j+n_1-1)$ by every label it crosses along the way. Note that it only crosses blue curves. Since $R_1$ is a Rampichini diagram, the labels $\tau_j$ of its arcs along any horizontal line (away from crossings), enumerated from left to right satisfy $\prod_{j=1}^{n_2-1}\tau_j=(1\ 2\ 3\ \ldots\ n_1-1\ n_1)$. Since the labels of the blue curves in the right half of $R$ are those of $R_1$, the label of the black curve on the right edge of square is 
\begin{equation}
(1\ 2\ 3\ \ldots\ n_1)^{-1}(i+n_1-1\ j+n_1-1)(1\ 2\ 3\ \ldots\ n_1).
\end{equation}
If $j>i> 1$, then the result of this conjugation is simply $(i+n_1-1\ j+n_1-1)$, while the corresponding label on the left edge of $R$ is the corresponding label in $R_2$ shifted by $n_1-1$. The label in $R_2$ is $(i-1\ j-1)$ because $R_2$ is a Rampichini diagram. Then the corresponding label on the left edge of $R$ is $(i+n_1-2\ j+n_1-2)$ and so the label on the right edge of $R$ is the same as the label on the left edge, only shifted by 1 (as it should be).\\
If $j>i=1$, then the result of the conjugation above is $(1\ j+n_1-1)$, which is the label of the arc on the right edge of $R$. On the left edge of $R$ we have the corresponding label in $R_2$ shifted by $n_1-1$. The label in $R_2$ is $(j-1\ n_2)$, so the label on the left edge of $R$ is $(j+n_1-2\ n_1+n_2-1)$. This is the same as the label on the right edge of $R$, only shifted by $-1$ modulo $n_1+n_2-1$.



Similarly, let $(i\ j)$ be the label of an arc on the left edge of $R_1$. Then the same transposition labels a blue arc on the vertical line in $R$ where $R_1$ and $R_2$ have been glued together. Then we obtain the label of the curve as it reaches the left edge of the square by conjugating $(i\ j)$ by all the labels it encounters along the way (from right to left). Since in $R_2$ the product of the labels from left to right is $(1\ 2\ 3\ \ldots\ n_2)$ and the labels in the left half of $R$ are the labels of $R_2$ shifted by $n_1-1$, the label of the curve with label $(i\ j)$ on the middle vertical line becomes
\begin{equation}
(n_1\ n_1+1\ n_1+2\ \ldots\ n_1+n_2-1)(i\ j)(n_1\ n_1+1\ n_1+2\ \ldots\ n_1+n_2-1)^{-1}
\end{equation} 
on the left edge of the square.

If $i<j<n_1$, this is simply $(i\ j)$. The label on the right edge of $R$ is the same as the corresponding label on the right edge of $R_1$, which is $(i+1\ j+1)$, since $R_1$ is a Rampichini diagram.\\
If $i<j=n_1$, then the result of the product above is $(i\ n_1+n_2-1)$. The label on the right edge of $R_1$ is $(i+1\ j+1)$ modulo $n_1$, so $(1\ i+1)$. Note that this is also the label on the left edge of $R$, which is the shift of $(i\ n_1+n_2-1)$ by 1 modulo $n_1+n_2-1$.


Thus the labels along the right edge of $R$ are its labels on the left edge shifted by 1 modulo $n_1+n_2-1$. Therefore, $R$ is a Rampichini diagram.

The labels along the middle vertical line, where $R_2$ (with shifted labels) and $R_2$ have been glued together correspond exactly to $B_1*_fB_2$. Thus, by~\cite{generalized_exchangeable2} it is a BKL-word that describes a braided fiber surface of a braided open book.
\end{proof}

\begin{figure}
\centering
\labellist
\pinlabel $R_2$ at -80 2500
\pinlabel $R_1$ at 1950 2500
\small
\pinlabel $(1\ 2)$ at 220 2500
\pinlabel $(1\ 3)$ at 530 2500
\pinlabel $(1\ 3)$ at 1400 2500
\pinlabel $(2\ 3)$ at 1650 2500

\pinlabel $(1\ 2)$ at -100 2290
\pinlabel $(2\ 3)$ at 750 2290
\pinlabel $(2\ 3)$ at -100 2150
\pinlabel $(1\ 3)$ at 750 2150
\pinlabel $(1\ 2)$ at -100 2050
\pinlabel $(2\ 3)$ at 750 2050
\pinlabel $(2\ 3)$ at -100 1930
\pinlabel $(1\ 3)$ at 750 1930
\pinlabel $(1\ 2)$ at 220 1700
\pinlabel $(1\ 3)$ at 530 1700

\pinlabel $(1\ 2)$ at 1060 2350
\pinlabel $(2\ 3)$ at 1910 2350
\pinlabel $(1\ 3)$ at 1060 2230
\pinlabel $(1\ 2)$ at 1910 2230
\pinlabel $(1\ 2)$ at 1060 2040
\pinlabel $(2\ 3)$ at 1910 2040
\pinlabel $(1\ 3)$ at 1060 1890 
\pinlabel $(1\ 2)$ at 1910 1890
\pinlabel $(1\ 3)$ at 1320 1700
\pinlabel $(2\ 3)$ at 1580 1700

\pinlabel $(3\ 4)$ at 430 1500
\pinlabel $(3\ 5)$ at 770 1500

\pinlabel $(3\ 4)$ at 50 1100
\pinlabel $(4\ 5)$ at 920 1050
\pinlabel $(4\ 5)$ at 50 800
\pinlabel $(3\ 5)$ at 620 700
\pinlabel $(3\ 4)$ at 50 600
\pinlabel $(4\ 5)$ at 920 680
\pinlabel $(4\ 5)$ at 50 320
\pinlabel $(3\ 5)$ at 920 370
\pinlabel $(3\ 4)$ at 430 -70
\pinlabel $(3\ 5)$ at 770 -70

\pinlabel $(4\ 5)$ at 1800 1130
\pinlabel $(1\ 5)$ at 1800 840
\pinlabel $(4\ 5)$ at 1800 640
\pinlabel $(1\ 5)$ at 1800 350

{{\color{blue}
\pinlabel $(1\ 3)$ at 1200 1500
\pinlabel $(2\ 3)$ at 1450 1500

\pinlabel $(1\ 2)$ at 920 1250
\pinlabel $(2\ 3)$ at 1800 1250
\pinlabel $(1\ 3)$ at 1220 980
\pinlabel $(1\ 2)$ at 1800 950
\pinlabel $(1\ 2)$ at 920 530
\pinlabel $(2\ 3)$ at 1800 530
\pinlabel $(1\ 3)$ at 920 180 
\pinlabel $(1\ 2)$ at 1800 200
\pinlabel $(1\ 3)$ at 1130 -70
\pinlabel $(2\ 3)$ at 1450 -70

\pinlabel $(1\ 2)$ at 50 1200
\pinlabel $(1\ 5)$ at 50 900
\pinlabel $(1\ 2)$ at 50 450
\pinlabel $(1\ 5)$ at 50 190
}}
\endlabellist
\includegraphics[height=9cm]{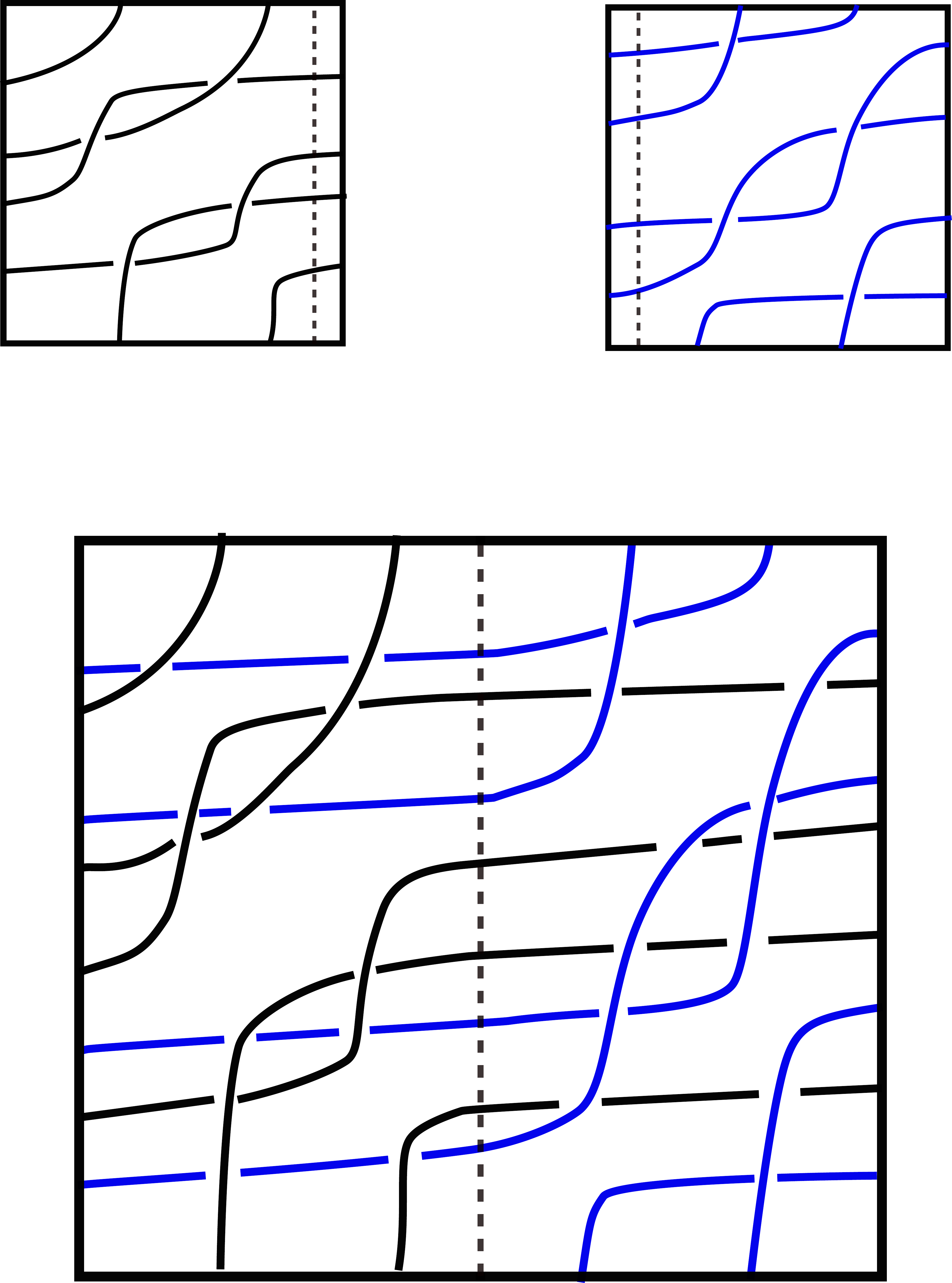}
\caption{Two Rampichini diagrams $R_1$ and $R_2$ of braids $B_1=a_{1,3}a_{1,2}a_{1,3}a_{1,2}$ and $B_2=a_{1,3}a_{2,3}a_{1,3}a_{2,3}$ and the Rampichini diagram $R$ of $B_1*_fB_2=a_{1,3}a_{3,5}a_{1,2}a_{4,5}a_{3,5}a_{1,3}a_{4,5}a_{1,2}$ for some merger function $f$.\label{fig:stallings_plumb_rampi}}
\end{figure}

\begin{figure}
\centering
\centering
\labellist
\pinlabel $R_2$ at -80 2340
\pinlabel $R_1$ at 1920 2340
\small
\pinlabel $(1\ 2)$ at 220 2340
\pinlabel $(1\ 3)$ at 530 2340
\pinlabel $(1\ 3)$ at 1400 2340
\pinlabel $(2\ 3)$ at 1650 2340

\pinlabel $(1\ 2)$ at -100 2130
\pinlabel $(2\ 3)$ at 750 2130
\pinlabel $(2\ 3)$ at -100 2020
\pinlabel $(1\ 3)$ at 750 2020
\pinlabel $(1\ 2)$ at -100 1920
\pinlabel $(2\ 3)$ at 750 1920
\pinlabel $(2\ 3)$ at -100 1800
\pinlabel $(1\ 3)$ at 750 1800
\pinlabel $(1\ 2)$ at 220 1570
\pinlabel $(1\ 3)$ at 530 1570

\pinlabel $(1\ 2)$ at 1060 2190
\pinlabel $(2\ 3)$ at 1890 2210
\pinlabel $(1\ 3)$ at 1060 2060
\pinlabel $(1\ 2)$ at 1890 2060
\pinlabel $(1\ 2)$ at 1060 1860
\pinlabel $(2\ 3)$ at 1890 1860
\pinlabel $(1\ 3)$ at 1060 1730 
\pinlabel $(1\ 2)$ at 1890 1730
\pinlabel $(1\ 3)$ at 1320 1570
\pinlabel $(2\ 3)$ at 1580 1570

\pinlabel $(3\ 4)$ at 430 1360
\pinlabel $(3\ 5)$ at 760 1360

\pinlabel $(3\ 4)$ at 130 1230
\pinlabel $(4\ 5)$ at 910 1260
\pinlabel $(4\ 5)$ at 130 1060
\pinlabel $(3\ 5)$ at 910 1100
\pinlabel $(3\ 4)$ at 130 920
\pinlabel $(4\ 5)$ at 910 970
\pinlabel $(4\ 5)$ at 130 780
\pinlabel $(3\ 5)$ at 910 810
\pinlabel $(3\ 4)$ at 430 -70
\pinlabel $(3\ 5)$ at 750 -70

\pinlabel $(4\ 5)$ at 1650 1230
\pinlabel $(1\ 5)$ at 1650 1070
\pinlabel $(4\ 5)$ at 1650 940
\pinlabel $(1\ 5)$ at 1650 800

{{\color{blue}
\pinlabel $(1\ 3)$ at 1190 1360
\pinlabel $(2\ 3)$ at 1430 1360

\pinlabel $(1\ 2)$ at 910 530
\pinlabel $(2\ 3)$ at 1650 450
\pinlabel $(1\ 3)$ at 910 400
\pinlabel $(1\ 2)$ at 1650 350
\pinlabel $(1\ 2)$ at 910 260
\pinlabel $(2\ 3)$ at 1650 240
\pinlabel $(1\ 3)$ at 910 130 
\pinlabel $(1\ 2)$ at 1650 80
\pinlabel $(1\ 3)$ at 1150 -70
\pinlabel $(2\ 3)$ at 1400 -70

\pinlabel $(1\ 2)$ at 130 450
\pinlabel $(1\ 5)$ at 130 350
\pinlabel $(1\ 2)$ at 130 230
\pinlabel $(1\ 5)$ at 130 80
}}
\endlabellist
\includegraphics[height=9cm]{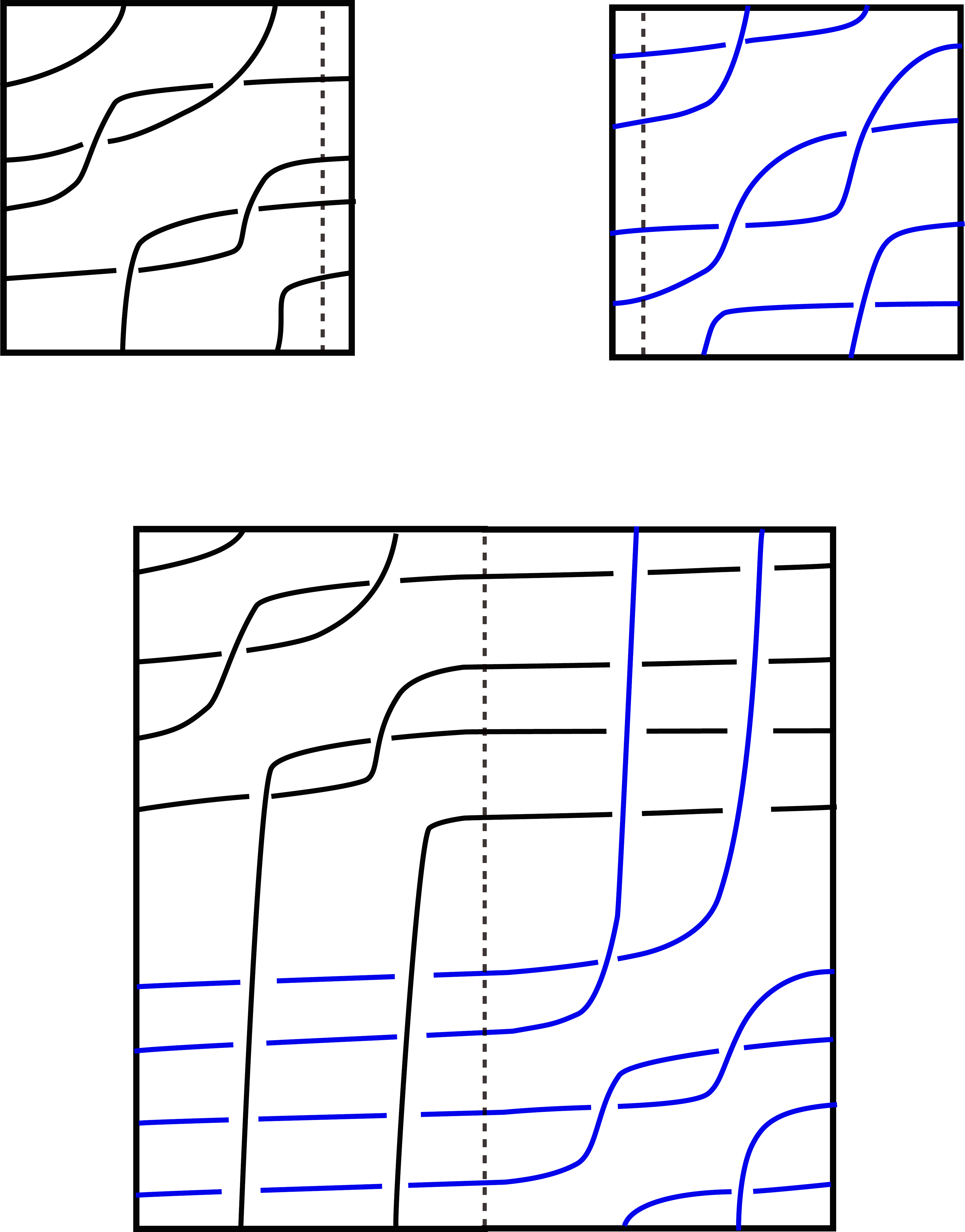}
\caption{Two Rampichini diagrams $R_1$ and $R_2$ of braids $B_1=a_{1,3}a_{1,2}a_{1,3}a_{1,2}$ and $B_2=a_{1,3}a_{2,3}a_{1,3}a_{2,3}$ and the Rampichini diagram $R$ of $B_1*_fB_2=a_{1,3}a_{1,2}a_{1,3}a_{1,2}a_{3,5}a_{4,5}a_{3,5}a_{4,5}$, where the merger function $f$ is the identity map.\label{fig:stallings_plumb_rampi2}}
\end{figure}

\begin{proof}[Proof of Corollary~\ref{cor:connected_sum}]
The identity map $f\colon\{1,2,\ldots,\ell_1+\ell_2\}\to\{1,2,\ldots,\ell_1+\ell_2\}$ is a merger of size $(\ell_1,\ell_2)$. The corresponding braided Stallings plumbing of two braided surfaces $B_1$ and $B_2$ has the property that the first $\ell_1$ bands have endpoints only on the first $n_1$ disks, spelling a word for $B_1$, while the last $\ell_2$ bands have endpoints only on the last $n_2$ disks, spelling a word for $i(B_2)$. An example of a corresponding Rampichini diagram is shown in Figure~\ref{fig:stallings_plumb_rampi2}. 
It follows that there is a splitting sphere that intersects the closure of $B_1*_fB_2$ in exactly two points and divides it into the closure of $B_1$ and the closure of $B_2$. Thus the closure of $B_1*_fB_2$ is the connected sum $L_1\#L_2$, where $L_i$, is the closure of $B_i$, $i=1,2$. The components that are used for the connected sum are determined by the braids $B_1$ and $B_2$. By Theorem~\ref{thm:stallings_plumb} the closure of $B_1*_fB_2$ is the binding of a braided open book.
\end{proof}

\subsection{Satellites}

\begin{proof}[Proof of Theorem~\ref{thm:sat}] $L_1$ is a knot that is the binding of a braided open book in $S^3$. Therefore, there exists an unknot $O\subset S^3\backslash L_1$ such that $L_1$ and $O$ are mutually braided. Now consider the satellite link $L$ with companion knot $L_1$ and pattern given by the closure of a P-fibered braid $B$ in $\mathbb{C}\times S^1$ with Seifert framing. We want to show that $L$ and $O$ are also mutually braided. 
Note that it follows from~\cite{mikami} that $L$ is fibered. We can explicitly describe the corresponding fibration map, which we denote by $\Phi\colon S^3\backslash L\to S^1$. 

Let $n$ be the number of strands of $B$ and let $\Psi\colon S^3\backslash L_1\to S^1$ be the fibration map on the complement of $L_1$ whose fibers are positively transverse to $O$. Outside of a tubular neighborhood $V$ of $L_1$ we define $\Phi=\Psi^n\colon S^3\backslash V\to S^1$, i.e.\ if $\Psi(x)=\rme^{\rmi \varphi}$ for some $x\in S^3\backslash V$, $\varphi\in[0,2\pi]$, then $\Phi(x)=\Psi^n(x)=\rme^{\rmi n\varphi}$. It follows that the fibers of $\Phi$ are still positively transverse to $O$.

Since $B$ is a P-fibered braid, it is given by the roots of a loop of monic polynomials $g_t$ of degree $n$ and with distinct roots with the property that $$\arg g\colon(\mathbb{C}\times S^1)\backslash B\to S^1,\quad g(u,\rme^{\rmi t})=g_t(u) \text{ for all } t\in[0,2\pi],$$ is a fibration map. Inside of $V\cong \mathbb{C}\times S^1$, we define the fibration map to be $\arg g$. Since
\begin{equation}
\lim_{R\to\infty}\arg g_t(R\rme^{\rmi \chi})=\rme^{\rmi n\chi}
\end{equation}
for all $t\in[0,2\pi]$ (in fact, for any monic polynomial of degree $n$), the fibers of $\arg g$ meet the boundary of $\mathbb{C}\times S^1$ (interpreted as an open solid torus) along longitudinal lines. For example, the level set $\arg g=1$ intersects $S^1\times S^1$ in the $n$ longitudes $\bigcup_{j=1}^n(\zeta^j,\rme^{\rmi t})$, $t\in[0,2\pi]$, where $\zeta=\rme^{\rmi 2\pi/n}$, the $n$th root of unity.

Since the satellite link was constructed in the Seifert framing, the maps $\Psi^n$ and $\arg g$ agree on $V$ and thus define a circle-valued map $\Phi\colon S^3\backslash L\to S^1$ on the complement of the satellite link $L$. Since the fibers of $\arg g$ have the behavior of pages of an open book near the binding, the fibers of $\Phi$ also exhibit the desired behavior, and thus $\Phi$ defines an open book.

We have already shown that $O$ is positively transverse to the fibers of $\Phi$. Since $L_1$ was positively transverse to the pages of the unbook with binding $O$ and the embedding of every strand of $B$ (as an arc of $L$) can be taken to be arbitrarily close to $L_1$ in the $C^1$-norm, it follows that $L$ is also positively transverse to the pages of the unbook with binding $O$. Thus $L$ and $O$ are mutually braided and by~\cite{bode:braided} the satellite link $L$ is the binding of a braided open book in $S^3$.
\end{proof} 

The article~\cite{bode:sat} (written before~\cite{bode:braided}) also discusses satellite operations on P-fibered braids, i.e.\ on braided open books. The tools in that article are analytical and focus on the blackboard framing, where the analogue of Theorem~\ref{thm:sat} does not automatically hold. Instead, the satellite of the binding $L_1$ of a braided open book with pattern $B^n$ in the blackboard framing is the binding of a braided open book if $B$ is P-fibered and $n$ is sufficiently large. For a more general discussion of the fiberedness of satellite knots, see~\cite{mikami}.


The proof of Theorem~\ref{thm:sat} also works with $L_1$ being the binding of an open book in some other 3-manifold such that it is mutually braided with some other fibered link $L_3$ (instead of the unknot $O$). Then the satellite with pattern a closed P-fibered braid in the Seifert framing is also mutually braided with $L_3$.

In all of these settings the arguments are easily generalized to the case where $L_1$ is a link: Let $L_1$ be the binding of a braided open book with $k$ components and let $B_1,B_2,\ldots,B_k$ be P-braids, all of which have the same number of strands $n$. Then the satellite operation that replaces each component of $L_1$ with one of the closed $B_i$ in the Seifert framing produces again the binding of a braided open book.

\begin{proof}[Proof of Corollary~\ref{cor:not_canonically_fibered}]
    Consider the infinite family of torus knots $T_{2,2n+1}$ ($n\geq1$), which are the closures of the braid on two strands with (an odd number of) positive crossings only. Since $T_{2,2n+1}$ is fibered and has braid index $2$, it is the binding of a braided open book~\cite{bode:braided}, for all $n\geq1$. (Alternatively, since they are alternating, this also follows from Corollary~\ref{cor:alternating}.) The $(p, q)$-cable of a knot is a special case of a satellite knot, where the pattern knot is given by the torus knot $T_{p,q}$ viewed as a knot in a solid torus. By Theorem~\ref{thm:sat}, the $(2,1)$-cables of $T_{2,2n+1}$ are again bindings of braided open books.
    
    For all $n \geq 1$, the $(2,1)$-cable of $T_{2,2n+1}$ has genus $2n$, which can be verified by hand~\cite{schubert}. For $1 \leq n \leq 50$, the lower bound for the canonical genus, obtained from the HOMFLY--PT polynomial as one-half of the highest power of $z$~\cite{gc_lower_bound}, is $4n - 1$, as computed using the code provided in Appendix~\ref{appendix}. It follows that the $(2,1)$-cable of $T_{2,2n+1}$ is not canonically fibered for $1 \leq n \leq 50$. Although we have not computed this lower bound for $n > 50$, we suspect that the same lower bound continues to hold.
\end{proof}

\appendix
\section{Lower bound on canonical genus}\label{appendix}
Here we provide the Sage source code~\cite{sagemath}.

\subsection*{Sage functions}
The function \texttt{compute\_gc(p,q,k,l)} takes four integers as input and computes the lower bound on the canonical genus obtained from the HOMFLY--PT polynomial of the $(k,l)$-cable of the torus knot $T_{p,q}$. The genus of this cable knot is computed by the function \texttt{cable\_genus(p,q,k,l)}. In the proof of Corollary~\ref{cor:not_canonically_fibered}, we only require the special case $(k,l) = (2,1)$ and $(p,q) = (2,2n+1)$, for all $1\leq n\leq 50$.

\lstset{
    language=Python,
    basicstyle=\ttfamily\small,
    keywordstyle=\color{blue}\bfseries,
    commentstyle=\color{green!60!black},
    stringstyle=\color{red},
    numbers=left,
    numberstyle=\tiny,
    stepnumber=1,
    numbersep=5pt,
    breaklines=true,
    showstringspaces=false,
    tabsize=4,
    frame=single
}

\begin{lstlisting}
import re

def torus_knot_braid_word(p, q):
    """
    Returns the braid word (as a list of integers) for the (p, q)-torus knot.
    Each integer i represents a geneator sigma_i.
    """
    single_twist = list(range(1, p))        # [1, 2, ..., p-1]
    braid_word = single_twist * q           # Repeat q times
    return braid_word

def torus_knot_genus(p, q):
    return ((p - 1) * (q - 1)) // 2

def writhe(word):
    '''Returns the writhe of a braid word.'''
    wr=0
    for w in word:
        wr=wr+sign(w)
    return wr

def cable(word,k,l):
    '''Returns a braid word of the (k,l)-cable. (For positive k and arbitrary l.)'''
    cable_word=[]
    for i in word:
        subword=[]
        for t in range(0,k):
            subword=subword+list(range(k*abs(i)+t,k*(abs(i)-1)+t,-1))
        if i<0:
            subword=[-j for j in subword]
        cable_word=cable_word+subword
    wr=writhe(word)
    if (l-k*wr)<0:
        cable_word=cable_word+(k*wr-l)*list(range(-1,-(k-1)-1,-1))
    if (l-k*wr)>=0:
        cable_word=cable_word+(-k*wr+l)*list(range(1,(k-1)+1,+1))
    return cable_word

def cable_genus(p, q, k, l):
    base_genus = torus_knot_genus(p, q)
    cable_genus = torus_knot_genus(k, l)
    return k * base_genus + cable_genus

def highest_power_of_z(polynomial: str) -> int:
    # Find all z exponents using regex
    matches = re.findall(r'z\^(\d+)', polynomial)
    if not matches:
        return 0  # If no z terms found, return 0
    return max(int(exp) for exp in matches)

def compute_gc(P,Q,M,N):
    # Generate the braid data
    data = cable(torus_knot_braid_word(P, Q), M, N)
    
    # Create the braid group and link
    B = BraidGroup(len(data))
    L = Link(B(data)); L
    
    # Get the HOMFLY-PT polynomial and extract the lower bound on canonical genus
    poly = str(L.homfly_polynomial(normalization='vz'))
    c = highest_power_of_z(poly) / 2
    return c
\end{lstlisting}

The following function \texttt{test\_up\_to\_k()} takes a positive integer $k$ as input and computes, for each $1 \leq n \leq k$, both the lower bound on the canonical genus and the genus of the $(2,1)$-cable of $T_{2,2n+1}$. For the input $k = 50$, we have used the function to verify that, for all $1 \leq n \leq k$, the lower bound on the canonical genus is $4n - 1$ and the genus is $2n$.

\begin{lstlisting}
def test_up_to_k(k):
    for n in range(1, k + 1):
        c = compute_gc(2,2*n+1,2,1)
        if c > cable_genus(2,2*n+1,2,1):
            if c== 4*n-1:
                print(f"The ({2},{1})-cable of T({2},2*{n}+1) has a lower bound on canonical genus given by 4*{n}-1= {c} > its genus {cable_genus(2,2*n+1,2,1)}=2*{n}.")
        else:
            if c == cable_genus(2,2*n+1,2,1):
                print(f"The ({2},{1})-cable of T({2},{2*n+1}) has potentially canonical genus {c} equal to its genus {cable_genus(2,2*n+1,2,1)}.")
\end{lstlisting}

\let\MRhref\undefined
\bibliographystyle{hamsalpha}  
\bibliography{Sources}

\end{document}